\newtheorem{thm}{Theorem}
\newtheorem{cor}{Corollary}
\newtheorem{prop}{Proposition}
\newtheorem{lem}{Lemma}
\newtheorem{asmpt}{Assumption}
\theoremstyle{definition}
\newtheorem{defi}{Definition}
\newtheorem{rmk}{Remark}
\newcommand{\KN}{\mathbin{\bigcirc\mspace{-15mu}\wedge\mspace{3mu}}}
\newcommand{\II}{I\! I}
\newcommand{\III}{I\! I\! I}
\newcommand{\cM}{\mathcal{M}}
\DeclareMathOperator{\Id}{Id}
\DeclareMathOperator{\tr}{tr}
\DeclareMathOperator{\proj}{proj}
\DeclareMathOperator{\Ric}{Ric}
\begin{document}


\title{Curvature of the Gauss map for normally flat submanifolds in space forms}

\author{Javier Álvarez-Vizoso}
\date{\today}

	


\maketitle
	
\abstract{For a submanifold with flat normal bundle in a space form  there is a normal orthonormal basis that simultaneously diagonalizes the corresponding Weingarten operators, and at which these operators satisfy a simple Codazzi symmetry. When the second fundamental form has zero index of relative nullity, the image of the Gauss map as defined by Obata is a submanifold of a generalized Grassmannian manifold, with induced metric given by the third fundamental form, corresponding to the sum of the squares of these operators. In this case, we show that the Riemann curvature tensor of the Gauss image is completely determined by the curvature and Weingarten operators of the original submanifold, in a form analogous to a \emph{theorema egregium} that relates the intrinsic geometry of the Gauss map with the extrinsic geometry of the original embedding. For this, we derive the difference vector of the induced Levi-Civita connections and express the Riemann curvature tensor with a generalized Kulkarni-Nomizu product.}


\section{Introduction}

Classically, the first fundamental form of a submanifold of Euclidean space is defined to be the restriction of the Euclidean scalar product to the submanifold tangent space. The second fundamental form of a hypersurface in turn is traditionally defined as the symmetric bilinear form that measures the tangent variation of the normal vector to the hypersurface when moving along tangent vectors. The corresponding Weingarten operator, or shape operator, is the linear map associated to this bilinear form with respect to the metric. The bilinear form defined by the square of this operator is the classical third fundamental form.

Considering the second fundamental form of a submanifold as a (pseudo-)Riemannian metric, there has been a series of studies, \cite{Aledo2007}\cite{Aledo2003}\cite{Aledo2005}\cite{haesen2007}\cite{haesen2010}\cite{verpoortTh}, that establish relationships between the curvature of the first and the second fundamental form metrics on a (hyper)surface, and compute explicit formulas for the scalar curvature in terms of extrinsically defined tensors via the Weingarten operator. In \citep{Aledo2005} the classical Liebmann-Hilbert rigidity theorem is proved in these terms, and \cite{haesen2010}\cite{verpoortTh} relate the scalar curvature to the mean curvature of the second fundamental form via deformations, in order to characterize, among other results, extrinsic hyperspheres and ovaloids. On the other hand, \citep{Liu1999} obtains a \emph{theorema egregium}-type formula relating the curvatures of the first and the third fundamental forms metrics for hypersurfaces in nonflat space-forms using relationships from the method of conjugate connections \cite{nomizu1991}.

Regarding the generalized third fundamental form to arbitrary submanifolds, classically studied for surfaces, e.g. \cite{Hartman1953}, Obata, \cite{obata1968gauss}, has extended the Gauss map to $n$-dimensional submanifolds isometrically immersed in space forms, $x:\mathcal{M}^n\rightarrow \overline{M}(k)^{n+m}$, using a general Grassmannian manifold $Q^{n+m} = G(n+m)/G(n)\times O(m)$, where $G$ is a suitable group, by associating to every point $p\in\cM$ the totally geodesic $n$-subspace tangent to $x(p)$ in $x(\cM)$. Given sufficient regularity conditions, e.g. if the Chern-Kuiper extrinsic zero index is null, cf. \cite{chernkuiper1952} and Sec. \ref{subsec:submanifolds}, this map $f:\cM^n\rightarrow Q^{n+m}$ yields a regular $n$-dimensional submanifold. Moreover, \cite{obata1968gauss} proves that the induced metric on this submanifold is the third fundamental form of $x(\cM)$. From this, computing the Riemann-Christoffel tensor of the third fundamental form metric corresponds to computing the curvature of the Gauss image. Similarly, \cite{nishikawa1975} generalizes the Gauss map and third fundamental form to Kähler immersions. Along these lines, \cite{muto1978} obtains necessary and sufficient conditions for the sectional curvature of the Grassmann manifold to vanish along a plane tangent to the Gauss image, for submanifolds of Euclidean space. Other works, e.g., \cite{Hoffman1982}\cite{vlachos2000}\cite{vlachos2007} further study (hyper)surfaces endowed with the third fundamental form metric. In particular, \cite{Vlachos2002} studies the curvature of the third fundamental form metric for minimal submanifolds of Euclidean space.

The purpose of the present work is to obtain a \emph{theorema-egregium}-type formula relating the curvature tensors of the Gauss image and the original submanifold in terms only of the shape operators of the original embedding, i.e. to relate the intrinsic geometry of $f(\cM)$ to the extrinsic geometry of $x(\cM)$.

Since the first and third fundamental forms do not seem to be conjugate connections in the sense of \cite{Liu1999}\cite{nomizu1991}, in codimension higher than one, we follow the coordinate-free method of \cite{Vlachos2002} determining the difference of the induced Levi-Civita connections in terms of Weingarten operators. Using this method on normally flat submanifolds, we arrive at a simplified version of \cite[Prop. 3.1]{Vlachos2002} that can be expressed in terms of the principal curvatures and principal normal vectors. Then, we obtain a closed formula for the full curvature tensor and scalar curvature, a result that complements \cite[Prop. 3.3]{Vlachos2002} for normally flat submanifolds in space forms, and that is amenable to be expressed in terms of a generalized Kulkarni-Nomizu product, resembling the structure of the classical Gauss equation.
The flat normal bundle requirement is assumed so that a simple Codazzi equation symmetry of the Weingarten operators covariant derivatives holds in higher codimension and, most importantly, so that there is a normal orthonormal basis at which the corresponding Weingarten operators are simultaneously diagonalizable.

The structure of the paper is as follows: in Section \ref{subsec:submanifolds} we introduce basic results on the geometry of submanifolds with flat normal bundle; in Section \ref{subsec:Obata} we define the third fundamental form in arbitrary codimension and state its interpretation as the induced metric on the Gauss image due to Obata. This sets the stage for the statement of our main theorem in Section \ref{subsec:statements}, relating the geometry of the submanifold to that of its Gauss map, to be proved in the rest of the paper. In Section \ref{subsec:Tvector} we explicitly compute the difference of the Levi-Civita connections induced from the first and third fundamental forms in terms of the Weingarten operators, and in Section \ref{subsec:principalNormals} we express this dependency in terms of the principal curvatures and principal normal vectors. In Section \ref{subsec:curvatureT} we prove our \emph{theorema-egregium}-type formula yielding the difference between the curvature tensors of the submanifold and its Gauss image, in terms of tensors determined solely from the shape operators; in Section \ref{subsec:curvatureKN} we introduce a generalized Kulkarni-Nomizu product that shows how some of these tensors factor and clarifies the similar role of our formula to the classical Gauss equation; in Section \ref{subsec:curvatureScalar} a formula for the scalar curvature of the Gauss map is obtained solely in terms of extrinsic scalars. 

\section{Preliminaries and statement of the main theorem}
\label{sec:preliminaries}

All throughout the text we shall make reference to results and equations of this first section.


\subsection{Submanifolds with flat normal bundle}
\label{subsec:submanifolds}

We shall assume standard notation on covariant derivatives, curvature tensors and the fundamental equations by Gauss, Codazzi-Mainardi and Ricci for submanifolds isometrically embedded in space forms. References for these results can be found, e.g., in \cite{kobayashiI,kobayashiII} and \cite{dajczer2019}.
We consdier our space forms to be the simply connected $(n+m)$-dimensional manifolds $\overline{M}(k)^{n+m}$ of constant sectional curvature such that $\overline{M}$ is: the Euclidean space $\mathbb{R}^{n+m}$ for $k=0$, the sphere $\mathbb{S}^{n+m}$ of radius $1/\sqrt{k}$ for $k>0$, and the hyperbolic space $\mathbb{H}^{n+m}$ of curvature $k<0$.

Let us review some of the essential properties of submanifolds which have flat normal bundle.

%
%
\begin{defi}
An isometric immersion $x:\mathcal{M}^n\rightarrow \overline{M}(k)^{n+m}$, of an $n$-dimensional manifold $\cM$ into an $(n+m)$-dimensional space form of constant sectional curvature $k$, is said to be normally flat, or to have flat normal connection, if the curvature tensor of the normal connection, $\nabla^\perp_X \xi = (\overline{\nabla}_X\xi)^\perp$, is zero for every $X,Y\in\mathfrak{X}(\cM),\, \xi\in\Gamma(N\cM)$, i.e., $$R^\perp (X,Y)\xi = \nabla_X^\perp\nabla_Y^\perp \xi - \nabla_Y^\perp\nabla_X^\perp \xi -\nabla^\perp_{[X,Y]}\xi = 0.$$
\end{defi}

\begin{prop}
The normal connection $\nabla^\perp$ of $x:\mathcal{M}^n\rightarrow \overline{M}(k)^{n+m}$ is flat if and only if at every point $p\in\cM$ there is an orthonormal basis $\{\xi_j\}_{j=1}^m$ of $N_p\cM$ that is parallel in the normal bundle, i.e., $$\nabla_X^\perp\xi_j = 0,\quad\forall X\in T_p\cM,\quad j=1,\dots,m.$$
\end{prop}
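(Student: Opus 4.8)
The plan is to prove the two implications separately. The forward direction (existence of a parallel orthonormal frame $\Rightarrow R^\perp=0$) will be an immediate consequence of the definition of $R^\perp$, while the reverse direction will rest on the classical fact that a flat metric connection admits local parallel frames, together with the observation that $\nabla^\perp$ is compatible with the bundle metric.

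For the ``if'' direction I would argue as follows. Suppose that on a neighborhood of $p$ there is an orthonormal frame $\{\xi_j\}_{j=1}^m$ of $N\cM$ with $\nabla^\perp_X\xi_j=0$ for every $X\in\mathfrak{X}(\cM)$ and every $j$. Substituting into the curvature formula gives $R^\perp(X,Y)\xi_j=\nabla^\perp_X\nabla^\perp_Y\xi_j-\nabla^\perp_Y\nabla^\perp_X\xi_j-\nabla^\perp_{[X,Y]}\xi_j=0$ for all $X,Y$. Since $R^\perp$ is tensorial in its normal argument and $\{\xi_j\}$ spans the normal space at each nearby point, $R^\perp$ vanishes near $p$; as $p\in\cM$ is arbitrary, $R^\perp\equiv 0$.

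For the ``only if'' direction I would fix $p$ and an orthonormal basis $\{e_j\}_{j=1}^m$ of $N_p\cM$, and seek normal sections $\xi_j$ defined near $p$ with $\nabla^\perp\xi_j=0$ and $\xi_j(p)=e_j$. The equation $\nabla^\perp\xi=0$ is an overdetermined first-order linear system whose integrability condition, obtained by commuting covariant derivatives, is exactly $R^\perp(X,Y)\xi=0$; since $R^\perp=0$ by hypothesis, the Frobenius theorem — equivalently, path-independence of $\nabla^\perp$-parallel transport on a simply connected neighborhood of $p$ — produces such $\xi_j$. To conclude I would verify that $\{\xi_j\}$ stays orthonormal: because $\overline{\nabla}$ is the Levi-Civita connection of $\overline{M}(k)$ and $\nabla^\perp_X\eta=(\overline{\nabla}_X\eta)^\perp$, for normal $\eta,\zeta$ the tangential part of $\overline{\nabla}_X\eta$ is orthogonal to $\zeta$, so $X\langle\eta,\zeta\rangle=\langle\overline{\nabla}_X\eta,\zeta\rangle+\langle\eta,\overline{\nabla}_X\zeta\rangle=\langle\nabla^\perp_X\eta,\zeta\rangle+\langle\eta,\nabla^\perp_X\zeta\rangle$; hence $\nabla^\perp$ is metric. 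Applying this to $\eta=\xi_i,\zeta=\xi_j$ shows $\langle\xi_i,\xi_j\rangle$ is locally constant, hence equal to its value $\delta_{ij}$ at $p$, so $\{\xi_j\}$ is the desired parallel orthonormal frame.

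The only substantive point is the implication $R^\perp=0\Rightarrow$ existence of the sections $\xi_j$: this is the standard integrability argument, which can be carried out via Frobenius on the bundle of orthonormal normal frames, or via a homotopy argument showing that $\nabla^\perp$-parallel transport around a loop null-homotopic in a small ball is the identity. Metric compatibility of $\nabla^\perp$, used above, also ensures parallel transport preserves orthonormality, so the two routes are consistent; I expect no further obstacle.
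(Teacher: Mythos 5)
Your proof is correct. The paper does not actually supply an argument here --- it simply cites \cite[Ch.~4, Prop.~1.1]{Chen2019} --- and what you have written is precisely the standard proof behind that citation: the easy direction by tensoriality of $R^\perp$ in its normal argument, and the converse via the Frobenius/parallel-transport integrability argument for the flat connection $\nabla^\perp$, with the metric compatibility $X\langle\eta,\zeta\rangle=\langle\nabla^\perp_X\eta,\zeta\rangle+\langle\eta,\nabla^\perp_X\zeta\rangle$ (valid because $\overline{\nabla}$ is Levi--Civita and the tangential parts pair to zero against normal vectors) guaranteeing that the parallel frame remains orthonormal. No gaps.
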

\begin{proof}
See \cite[Ch. 4, Prop. 1.1]{Chen2019}.
\end{proof}
\begin{prop}
There is a normal orthonormal basis $\{ \xi_j\}_{j=1}^m$ of $T_p\cM$, such that the Weingarten operators $\{ A_{\xi_j}\}_{j=1}^m$ are simultaneously diagonalizable, if and only if the isometric immersion $x:\mathcal{M}^n\rightarrow \overline{M}(k)^{n+m}$ has flat normal bundle.
\end{prop}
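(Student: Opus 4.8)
The plan is to reduce both implications to the Ricci equation together with the spectral theorem for a commuting family of self-adjoint endomorphisms. Recall that for an isometric immersion into the space form $\overline{M}(k)$ the ambient curvature evaluated on two tangent and two normal vectors vanishes (the tangent arguments are orthogonal to the normal ones and $\overline{M}(k)$ has constant curvature), so the Ricci equation collapses to
$$\langle R^\perp(X,Y)\xi,\eta\rangle \;=\; \langle [A_\xi,A_\eta]X,Y\rangle,\qquad X,Y\in\mathfrak{X}(\cM),\ \xi,\eta\in\Gamma(N\cM),$$
the sign depending on conventions but being irrelevant below. Since each $A_\xi$ is $g$-self-adjoint and $R^\perp(X,Y)\xi$ is again normal, this identity shows that at a fixed point $p$ one has $R^\perp_p = 0$ if and only if $[A_\xi,A_\eta]=0$ for all $\xi,\eta\in N_p\cM$; ``flat normal bundle'' is precisely the requirement that this hold at every $p$. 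Both assertions are pointwise, so it suffices to prove, at a fixed $p$: there is an orthonormal basis $\{\xi_j\}_{j=1}^m$ of $N_p\cM$ with $\{A_{\xi_j}\}$ simultaneously diagonalizable if and only if the operators $A_\xi$, $\xi\in N_p\cM$, commute pairwise.

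For the direction from flat normal bundle to simultaneous diagonalizability, fix any orthonormal basis $\{\xi_j\}_{j=1}^m$ of $N_p\cM$. The endomorphisms $A_{\xi_1},\dots,A_{\xi_m}$ of $T_p\cM$ are self-adjoint for the single inner product $g_p$ and commute by hypothesis, so the spectral theorem for a commuting family of self-adjoint operators on a finite-dimensional real inner product space --- proved by induction on $\dim T_p\cM$, each eigenspace of $A_{\xi_1}$ being invariant under all the $A_{\xi_j}$ and the restrictions there being again commuting and self-adjoint --- produces a $g_p$-orthonormal basis of $T_p\cM$ of common eigenvectors; this is the desired simultaneous diagonalization.

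Conversely, suppose some orthonormal normal basis $\{\xi_j\}$ simultaneously diagonalizes the $A_{\xi_j}$, so these $m$ operators commute pairwise. Since the second fundamental form is bilinear, $\xi\mapsto A_\xi$ is $\mathbb{R}$-linear, i.e. $A_{\sum_j c_j\xi_j}=\sum_j c_j A_{\xi_j}$; hence every shape operator at $p$ is a linear combination of $A_{\xi_1},\dots,A_{\xi_m}$, and so all shape operators at $p$ commute. Feeding this into the reduced Ricci equation gives $\langle R^\perp(X,Y)\xi,\eta\rangle=0$ for all tangent $X,Y$ and normal $\xi,\eta$; as $R^\perp(X,Y)\xi\in N_p\cM$, letting $\eta$ run over a basis yields $R^\perp_p=0$, and since $p$ is arbitrary the immersion has flat normal bundle.

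The only non-formal ingredient is the simultaneous-diagonalization lemma, so the one point that needs care is invoking it correctly: all the $A_{\xi_j}$ must be self-adjoint with respect to the \emph{same} metric (true, since they are all Weingarten operators of the induced metric $g$), and the common eigenbasis can be taken orthonormal because on each eigenspace of one operator the others restrict to self-adjoint endomorphisms. Everything else is bookkeeping with the Ricci equation and the linearity of $\xi\mapsto A_\xi$.
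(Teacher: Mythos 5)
Your proof is correct and follows exactly the route the paper intends: its own proof is the single line ``This immediately follows from the Ricci equation,'' and your argument is precisely the standard expansion of that remark --- the space-form Ricci equation reduces $R^\perp=0$ to pairwise commutativity of the shape operators, which by the spectral theorem for commuting self-adjoint endomorphisms is equivalent to simultaneous orthonormal diagonalizability. No issues; you have simply supplied the details the paper leaves implicit.
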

\begin{proof}
This immediately follows from the Ricci equation.
\end{proof}
From the simultaneous diagonalization of the shape operators, the tangent space decomposes orthogonally at every $p\in\cM$ as $T_p\cM=E_1(p)\oplus\cdots\oplus E_s(p)$, for some $1\leq s(p)\leq n$, such that for every normal vector $\xi\in N_p\cM$ there are real numbers $\lambda_i(\xi), 1\leq i\leq s$, called \emph{principal curvatures}, such that
\begin{equation}\label{eq:principalCurvatures}
A_\xi\vert_{E_i(p)} = \lambda_i(\xi)\Id
\end{equation}
and the maps $\xi\mapsto\lambda_i(\xi)$ are pairwise distinct. These maps are linear so there exist unique pairwise distinct normal vectors $\eta_i(x)\in N_x\cM$, the \emph{principal normal} vectors, satisfying
\begin{equation}\label{eq:principalNormals}
\lambda_i(\xi) = \langle\, \eta_i(p), \xi \,\rangle, \quad 1\leq i \leq s.
\end{equation}
This makes the Weingarten operators take the simple expression
\begin{equation}\label{eq:Aprincipals}
 A_\xi X = \sum_{i=1}^s \langle\, \xi, \eta_i(p) \,\rangle\proj_i X,\quad \forall X\in T_p\cM,\; \forall\xi\in N_p\cM.
\end{equation}

\begin{rmk}\label{rmk:productRule}
The canonical connection on $Hom^2(TM, NM; TM)$ yields a covariant derivative for the shape operators
$$
(\nabla_X A_\xi)X = \nabla_X (A_\xi X) - A_\xi\nabla_X Y - A_{\nabla^\perp_X\xi}Y,
$$
that for normally flat submanifolds, and $\xi$ one of its parallel normal vectors, leads us to use the following result and notation for the rest of this paper
\begin{equation}\label{eq:Aderivative}
(\nabla_X A_\xi)X = \nabla_X (A_\xi X) - A_\xi\nabla_X Y,
\end{equation}
i.e., we can work with this expression, for expanding or collecting parentheses, as if a product rule is taking place.
\end{rmk}
Finally, all along the proofs of the paper we shall make extensive use of the Codazzi symmetries of this covariant derivative, which for our purpose are enough to be true when $\xi$ is a parallel normal vector.

\begin{prop}[Codazzi symmetries]
The Weingarten operators of a normally flat submanifold in a space form satisfy:
\begin{equation}\label{eq:Codazzi1}
\langle\, (\nabla_X A_\xi) Y,\, Z \rangle = \langle\, (\nabla_Y A_\xi) X,\, Z \rangle = \langle\, (\nabla_X A_\xi) Z,\, Y \rangle,
\end{equation}
for all $ X,Y, Z\in \mathfrak{X}(M)$ and $\xi$ a parallel vector in the normal connection.
\end{prop}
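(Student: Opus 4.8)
The plan is to deduce the first equality in \eqref{eq:Codazzi1} from the Codazzi--Mainardi equation together with the flat-normal-bundle hypothesis, and the second from the self-adjointness of the shape operator.

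First I would invoke the Codazzi--Mainardi equation for the isometric immersion $x:\cM^n\to\overline{M}(k)^{n+m}$. Since the ambient space has constant sectional curvature, the normal component of $\overline{R}(X,Y)Z$ vanishes, so the equation reads
$$(\nabla_X A_\xi)Y - A_{\nabla^\perp_X\xi}Y = (\nabla_Y A_\xi)X - A_{\nabla^\perp_Y\xi}X,$$
for all $X,Y\in\mathfrak{X}(M)$ and $\xi\in\Gamma(N\cM)$. Taking $\xi$ parallel in the normal connection, the terms $A_{\nabla^\perp_X\xi}Y$ and $A_{\nabla^\perp_Y\xi}X$ drop out, leaving $(\nabla_X A_\xi)Y = (\nabla_Y A_\xi)X$; pairing against $Z$ with the metric gives the first identity.

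For the remaining equality I would differentiate the self-adjointness relation $\langle A_\xi Y,\, Z\rangle = \langle Y,\, A_\xi Z\rangle$. Applying $X$ and using that $\nabla$ is metric compatible,
$$\langle \nabla_X(A_\xi Y),\, Z\rangle + \langle A_\xi Y,\, \nabla_X Z\rangle = \langle \nabla_X Y,\, A_\xi Z\rangle + \langle Y,\, \nabla_X(A_\xi Z)\rangle.$$
On the other hand, by the product rule of Remark~\ref{rmk:productRule} together with the self-adjointness of $A_\xi$,
$$\langle (\nabla_X A_\xi)Y,\, Z\rangle = \langle \nabla_X(A_\xi Y),\, Z\rangle - \langle \nabla_X Y,\, A_\xi Z\rangle, \qquad \langle Y,\, (\nabla_X A_\xi)Z\rangle = \langle Y,\, \nabla_X(A_\xi Z)\rangle - \langle A_\xi Y,\, \nabla_X Z\rangle.$$
Comparing these three displays yields $\langle (\nabla_X A_\xi)Y,\, Z\rangle = \langle (\nabla_X A_\xi)Z,\, Y\rangle$, i.e. $\nabla_X A_\xi$ is again self-adjoint; chaining this with the first identity closes the argument.

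The computation is essentially routine; the only thing to keep straight is which step carries geometric content and which is purely formal. The first equality is where the hypotheses genuinely enter — constant ambient curvature (to kill the normal part of $\overline{R}$) and flatness of the normal bundle (to exhibit the parallel frame, via the preceding Proposition, so that $\nabla^\perp\xi=0$). The second equality is pure linear algebra: it follows from the self-adjointness of $A_\xi$ with respect to the induced metric and the Leibniz rule recorded in Remark~\ref{rmk:productRule}, and holds with no restriction on $\xi$. I do not anticipate any further obstacle.
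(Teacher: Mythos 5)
Your proof is correct and follows essentially the same route as the paper: the first equality is the Codazzi--Mainardi equation in a constant-curvature ambient with the $A_{\nabla^\perp\xi}$ terms killed by parallelism of $\xi$, and the second is the self-adjointness of $\nabla_X A_\xi$ obtained by differentiating $\langle A_\xi Y,Z\rangle=\langle Y,A_\xi Z\rangle$. The paper merely cites references for both steps, so your write-up simply supplies the details it leaves implicit.
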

\begin{proof}
The first equality is the traditional Codazzi symmetry satisfied by submanifolds of a manifold of constant sectional curvature, see \cite[Eq. 1.16]{dajczer2019}. The second equality is the self-adjointness of the shape operator covariant derivative, which follows straightforwardly from $A$ being self-adjoint (e.g., see also Lemma \ref{lem:Obata}) below.
\end{proof}
%

\subsection{The Gauss-Obata map and the third fundamental form}
\label{subsec:Obata}

\begin{defi}
Let $x:\cM^n\rightarrow \overline{\cM}^{n+m}$ be a submanfiold of a Riemannian manifold, and let $\{A_{\xi_j}\}_{j=1}^m$ be the Weingarten operators at an orthonormal basis $\{\xi_j\}_{j=1}^m$ of the normal bundle $N\cM$. The \emph{Obata operator} is defined to be
\begin{equation}
W=\sum_{j=1}^m A^2_{\xi_j} : T\cM \rightarrow T\cM.
\end{equation}
\end{defi}

\begin{lem}\label{lem:Obata}
The Obata operator satisfies the following properties for any $ X,Y,Z\in \mathfrak{X}(M)$:
\begin{enumerate}
\item It is self-adjoint with respect to the first fundamental form:
\begin{equation}
\langle\, W X, Y\, \rangle = \langle\, X, W Y\, \rangle.
\end{equation}
\item Its covariant derivative is self-adjoint:
\begin{equation}\label{eq:ObataSym}
\langle\, (\nabla_X W) Y, Z\, \rangle = \langle\, (\nabla_X W) Z,\, Y \rangle.
\end{equation}
\item Its component with respect to the first fundamental form are independent of the normal orthornomal basis chosen for the shape operators, i.e.:
\begin{equation}
\langle\, W X, Y\, \rangle = \sum_{j=1}^m \langle\, A_{\xi_j}X ,\, A_{\xi_j}Y \,\rangle = \sum_{j=1}^m \langle\, A_{n_j}X ,\, A_{n_j}Y \,\rangle
\end{equation}
for any other orthonormal basis $\{n_j\}_{j=1}^m$ of $N\cM$.
\end{enumerate}
\end{lem}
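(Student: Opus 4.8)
The plan is to prove each of the three properties in turn, all of which follow from the fact that each individual shape operator $A_{\xi_j}$ is self-adjoint with respect to the first fundamental form.

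\emph{Property (1).} First I would observe that $\langle W X, Y\rangle = \sum_{j=1}^m \langle A_{\xi_j}^2 X, Y\rangle = \sum_{j=1}^m \langle A_{\xi_j} X, A_{\xi_j} Y\rangle$, using self-adjointness of each $A_{\xi_j}$ once. Since the final expression is manifestly symmetric in $X$ and $Y$, applying self-adjointness again in the other slot gives $\langle W X, Y\rangle = \langle X, W Y\rangle$. This simultaneously establishes the first equality in Property (3) as a byproduct.

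\emph{Property (2).} Here I would use Remark \ref{rmk:productRule}, so that for a parallel normal frame $\{\xi_j\}$ the covariant derivative behaves like a product rule: $(\nabla_X W)Y = \sum_j (\nabla_X A_{\xi_j}^2)Y = \sum_j \big( (\nabla_X A_{\xi_j})(A_{\xi_j}Y) + A_{\xi_j}((\nabla_X A_{\xi_j})Y) \big)$. Pairing with $Z$ and using self-adjointness of $A_{\xi_j}$ on the second term, I would get $\langle (\nabla_X W)Y, Z\rangle = \sum_j \big( \langle (\nabla_X A_{\xi_j})(A_{\xi_j}Y), Z\rangle + \langle (\nabla_X A_{\xi_j})Y, A_{\xi_j}Z\rangle \big)$. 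Now I apply the Codazzi symmetry \eqref{eq:Codazzi1} — specifically the self-adjointness of $\nabla A_{\xi_j}$ in its two tangential arguments — to both terms: the first becomes $\langle \nabla_X A_{\xi_j}(Z), A_{\xi_j}Y\rangle$ and the second becomes $\langle (\nabla_X A_{\xi_j})(A_{\xi_j}Z), Y\rangle$, after also moving $A_{\xi_j}$ across. Collecting, the sum is symmetric under $Y\leftrightarrow Z$, which is exactly \eqref{eq:ObataSym}. I should be careful that this computation is frame-dependent only through the intermediate step but the conclusion is a tensorial identity, so it suffices to verify it for the parallel frame; alternatively the identity is pointwise and one is free to pick the parallel frame at the point.

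\emph{Property (3).} The first equality is already done in Property (1). For the second equality, I would note that any two orthonormal bases of $N_p\cM$ are related by an orthogonal transformation $n_i = \sum_j O_{ij}\xi_j$ with $O\in O(m)$, and since $\xi\mapsto A_\xi$ is linear, $A_{n_i} = \sum_j O_{ij} A_{\xi_j}$. Then $\sum_i \langle A_{n_i}X, A_{n_i}Y\rangle = \sum_{i,j,k} O_{ij}O_{ik}\langle A_{\xi_j}X, A_{\xi_k}Y\rangle = \sum_{j,k}(O^\top O)_{jk}\langle A_{\xi_j}X, A_{\xi_k}Y\rangle = \sum_j \langle A_{\xi_j}X, A_{\xi_j}Y\rangle$ by orthogonality, which is the claim; equivalently one says $\langle WX,Y\rangle = \tr(A_X^\top A_Y)$ in matrix terms and the trace is basis-independent.

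The main obstacle is the bookkeeping in Property (2): one must apply the Codazzi symmetry \eqref{eq:Codazzi1} carefully and in the right slots, and track which $A_{\xi_j}$ is moved across an inner product versus which covariant derivative is transposed, so that the two resulting terms genuinely assemble into a $Y\leftrightarrow Z$ symmetric expression. Properties (1) and (3) are essentially immediate from self-adjointness and orthogonality of the change-of-frame matrix, respectively.
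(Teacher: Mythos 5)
Your proof is correct. Parts (1) and (3) are exactly the ``direct computation'' the paper has in mind, and your orthogonal change-of-frame argument for basis independence in (3) is the standard one. The only place you genuinely diverge from the paper is part (2): the paper's proof never touches the decomposition $W=\sum_j A_{\xi_j}^2$, but simply differentiates the symmetry $X\,\III(Y,Z)=X\,\III(Z,Y)$, expands both sides by metric compatibility and the Leibniz rule for $\nabla_X W$, and cancels the remaining terms using the self-adjointness of $W$ from part (1); that argument is frame-free, needs no flat normal bundle, and shows more generally that the covariant derivative of any self-adjoint $(1,1)$-tensor field is self-adjoint (it is essentially the same argument the paper invokes for the second equality in equation \ref{eq:Codazzi1}). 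Your route instead expands $(\nabla_X W)Y=\sum_j\bigl((\nabla_X A_{\xi_j})(A_{\xi_j}Y)+A_{\xi_j}(\nabla_X A_{\xi_j})Y\bigr)$ and transposes each $\nabla_X A_{\xi_j}$; the bookkeeping checks out, since after transposition the two summands swap into each other under $Y\leftrightarrow Z$. Two small observations: you only ever use the second equality of equation \ref{eq:Codazzi1} (self-adjointness of $\nabla_X A_{\xi_j}$), never the genuine Codazzi identity, so no space-form hypothesis is smuggled in; and your appeal to the parallel frame is not actually needed here, because for any fixed local orthonormal normal frame the field $p\mapsto A_{\xi_j(p)}$ is a self-adjoint $(1,1)$-tensor field and the tensorial Leibniz rule applies to $\sum_j A_{\xi_j}^2$ regardless --- the parallel frame only matters when one wants $\nabla_X A_{\xi_j}$ to coincide with the Codazzi-symmetric shape-operator derivative of Remark \ref{rmk:productRule}. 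What your route buys is the explicit expansion of $\nabla_X W$ in terms of the $\nabla_X A_{\xi_j}$, which the paper in fact reuses in the proof of Theorem \ref{th:main}(i); what the paper's route buys is brevity and greater generality.
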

\begin{proof}
The properties follow straightforwardly by direct computation. For example (ii) follows from $X\III(Y,Z) = X\III(Z,Y)$, since then
\begin{align*}
 \langle \nabla_X(WY),Z \rangle  + \langle WY, \nabla_X Z \rangle = \langle \nabla_X(WZ),Y \rangle  + \langle WZ, \nabla_X Y \rangle
\end{align*}
which by expanding the derivative of the product gives
\begin{align*}
 \langle (\nabla_XW)Y,Z \rangle + \langle W\nabla_X Y, Z\rangle + \langle WY, \nabla_X Z \rangle = \langle (\nabla_XW)Z,Y \rangle + \langle W\nabla_X Z, Y\rangle + \langle WZ, \nabla_X Y \rangle
\end{align*}
and most of the terms cancel by self-adjointess.
\end{proof}
In view of properties $(i)$ and $(iii)$ the following is well-defined.

\begin{defi}
The \emph{third fundamental form} $\III$ of a Riemannian submanifold $x:\cM^n\rightarrow \overline{\cM}^{n+m}$ is the symmetric bilinear form associated to the Obata operator with respect to the induced metric:
\begin{equation}
\III(X,Y) = \langle\, W X, Y\, \rangle,
\end{equation}
for all $ X,Y\in \mathfrak{X}(M)$.
\end{defi}
%

Now, let us review the definitions and results from \cite{obata1968gauss} about the generalized Gauss map. Consider the group $G(n+m)$ associated to the corresponding space form as one of the following: the orthogonal group $O(n+m+1)$ for $k>0$, the group $\mathbb{E}(n+m)$ of Euclidean motions of $\mathbb{R}^{n+m}$ for $k=0$, the group $O(1,n+m)$ of inhomogeneous Lorentz transformations on $\mathbb{H}^{n+m}$ for $k<0$. Let $Q$ be the set of all totally geodesic $n$-spaces in $\overline{M}$, then $G(n+m)$ acts on $\overline{M}$ transitively. In fact, $Q$ can be identified with a homogeneous space
\begin{equation}
Q^{n+m} = G(n+m)/G(n)\times O(m).
\end{equation}
In the case $G(n+m)= O(n+m+1)$, Q is the classical Grassmann manifold and the association of every point $p\in\cM\subset\overline{M}$ to the totally geodesic $n$-spaces correspond to the classical Gauss map. We consider the assignment in the other cases as a \emph{generalized Gauss(-Obata) map}, i.e., to $x:\mathcal{M}^n\rightarrow \overline{M}(k)^{n+m}$ we associate the map $f:\mathcal{M}^n\rightarrow (Q^{n+m},\sigma)$, where $f(p)$ is the totally geodesic $n$-space tangent to $x(\cM)$ at $x(p)$, and $\sigma$ is the standard (pseudo)-Riemannian metric of $Q$ as explained in \cite{obata1968gauss}.

The image of $f$ is called the \emph{Gauss image} in the generalized Grassmannian. The \emph{index of relative nullity}, or Chern-Kuiper extrinsic zero index, cf. \cite{chernkuiper1952}, at a point $p\in\cM$, is the dimension of the maximal subspace in $T_p\cM$ on which the second fundamental form $\II$ of $x(\cM)$ is zero. It can readily be seen that if $\nu=0$ on the whole submanifold, then the Gauss image will be a regular $n$-dimensional submanifold in $Q$ and the rank of $f$ will be maximal.
\begin{asmpt}
In the rest of the present work we consider always submanifolds with $\nu=0$ or restrict to the domain where this is valid. Moreover, we assume the restriction to the subdomain of $x(\cM)$ such that sufficient regularity for the Obata operator $W$ holds, in order to induce a non-degenerate bilinear form $\III$, and to have a well defined inverse $W^{-1}$ at every point of consideration.
\end{asmpt}

The motivation for the previous definitions and the rest of the paper is the following 

\begin{thm}[Obata \cite{obata1968gauss}]
The induced metric $f^*\sigma$ on the Gauss image of $f:\mathcal{M}^n\rightarrow (Q^{n+m},\sigma)$, of an isometrically immersed manifold in a space form, $x:\mathcal{M}^n\rightarrow \overline{M}(k)^{n+m}$, is equal to the third fundamental form of $x(\cM)$, and satisfies the following relation with respect to the Ricci tensor of $\cM$, and the second fundamental form of $x(\cM)$ in the direction of the mean curvature vector $H$:
\begin{equation}\label{eq:Obata}
(f^*\sigma)(X,Y) = \III(X,Y) = k(n-1)\langle\, X,\, Y\,\rangle + \langle\, \II(X,Y),\, H \,\rangle - \Ric(X,Y)
\end{equation}
for any $ X,Y\in \mathfrak{X}(M)$.
\end{thm}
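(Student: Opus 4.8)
The statement splits into two independent claims: the identification $f^{*}\sigma = \III$ of the Gauss‑image metric with the third fundamental form, and the expression \eqref{eq:Obata} of $\III$ through the ambient curvature constant, the second fundamental form in the mean‑curvature direction, and $\Ric$. I would treat them separately; note that neither needs the flat‑normal‑bundle hypothesis or $\nu = 0$ beyond what is required for $f$ to be an immersion.

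For the first claim, the plan is to compute $df_{p}$ and the metric $\sigma$ explicitly. The tangent space $T_{f(p)}Q$ to $Q^{n+m} = G(n+m)/(G(n)\times O(m))$ is canonically identified, via the reductive decomposition of $\mathrm{Lie}(G(n+m))$ at the isotropy subgroup, with $\mathrm{Hom}\big(T_{x(p)}x(\cM), N_{x(p)}x(\cM)\big)$; under this identification $\sigma$ becomes, up to the sign fixed by its signature, the Hilbert--Schmidt form $\sigma(\phi,\psi) = \sum_{i}\langle \phi(e_{i}), \psi(e_{i})\rangle$ for an orthonormal tangent frame $\{e_{i}\}$. Differentiating the Gauss map along a curve with velocity $X$, the tangent $n$‑plane varies to first order by the normal component of the ambient covariant derivative of a tangent frame, i.e. $df_{p}(X)$ is the homomorphism $Z \mapsto \II_{p}(X,Z)$, which against a parallel normal frame $\{\xi_{j}\}$ is recorded by the shape operators: $df_{p}(X) \leftrightarrow (A_{\xi_{1}}X,\dots,A_{\xi_{m}}X)$. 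Hence $\sigma(df_{p}X, df_{p}Y) = \sum_{i}\langle \II(X,e_{i}), \II(Y,e_{i})\rangle$; expanding $\II(X,e_{i}) = \sum_{j}\langle A_{\xi_{j}}X, e_{i}\rangle\xi_{j}$ and summing over $i$ collapses this to $\sum_{j}\langle A_{\xi_{j}}X, A_{\xi_{j}}Y\rangle = \langle WX, Y\rangle = \III(X,Y)$ by the definition of $W$ and Lemma \ref{lem:Obata}. This is Obata's computation and may simply be cited from \cite{obata1968gauss}; I would nonetheless record the collapsing identity $\sum_{i}\langle \II(e_{i},Z), \II(e_{i},Y)\rangle = \III(Y,Z)$ explicitly, since it reappears in the second claim.

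For the second claim, the plan is to contract the Gauss equation. In $\overline{M}(k)$ it reads
\[
\langle R(X,Y)Z, T\rangle = k\big(\langle Y,Z\rangle\langle X,T\rangle - \langle X,Z\rangle\langle Y,T\rangle\big) + \langle \II(X,T), \II(Y,Z)\rangle - \langle \II(X,Z), \II(Y,T)\rangle .
\]
Fix $p$ and an orthonormal frame $\{e_{i}\}_{i=1}^{n}$ of $T_{p}\cM$, set $X = T = e_{i}$, and sum over $i$. The ambient term contributes $k\big(n\langle Y,Z\rangle - \langle Y,Z\rangle\big) = k(n-1)\langle Y,Z\rangle$; the term $\sum_{i}\langle \II(e_{i},e_{i}), \II(Y,Z)\rangle$ equals $\langle H, \II(Y,Z)\rangle$ with $H = \sum_{i}\II(e_{i},e_{i})$ the (unnormalized) mean curvature vector; and $\sum_{i}\langle \II(e_{i},Z), \II(e_{i},Y)\rangle = \III(Y,Z)$ by the collapsing identity. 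Since $\sum_{i}\langle R(e_{i},Y)Z, e_{i}\rangle = \Ric(Y,Z)$, this yields $\Ric(Y,Z) = k(n-1)\langle Y,Z\rangle + \langle \II(Y,Z), H\rangle - \III(Y,Z)$, which is \eqref{eq:Obata} after rearrangement; the identity is independent of the sign convention adopted for $R$, as $\Ric$ flips with it.

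The real work is all in the first claim: pinning down the identification $T_{f(p)}Q \cong \mathrm{Hom}(T x(\cM), N x(\cM))$ and the precise form of the invariant metric $\sigma$ uniformly for $k>0$, $k=0$ and $k<0$ — in particular tracking the possibly indefinite signature of $\sigma$ when $G(n+m)$ is the non‑compact group $O(1,n+m)$ — while still showing that $df_{p}$ is given there by the second fundamental form. Once that is in place (or cited from \cite{obata1968gauss}), the second claim is a one‑line trace computation; the only subtlety there is normalizing $H$ as the trace $\sum_{i}\II(e_{i},e_{i})$, and not $\tfrac1n$ of it, so that the coefficient of $\langle \II(X,Y), H\rangle$ in \eqref{eq:Obata} is exactly $1$.
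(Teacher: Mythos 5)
The paper states this result without proof, attributing it entirely to \cite{obata1968gauss}, so there is no internal argument to compare against; your reconstruction is correct and is essentially the standard route. The identification $f^*\sigma=\III$ via $df_p(X)\leftrightarrow(Z\mapsto\II(X,Z))$ and the Hilbert--Schmidt form on $\mathrm{Hom}(T_{x(p)}x(\cM),N_{x(p)}x(\cM))$ is precisely Obata's computation and is legitimately cited, while your derivation of the displayed formula by setting $X=T=e_i$ in the Gauss equation and summing is a clean, correct verification consistent with the paper's conventions ($H=\tr_I\II$ unnormalized, $\Ric(Y,Z)=\sum_i\langle R(e_i,Y)Z,e_i\rangle$); the only superfluous hypothesis is the parallelism of the normal frame, which the pointwise collapsing identity $\sum_i\langle\II(e_i,Z),\II(e_i,Y)\rangle=\III(Y,Z)$ does not require (orthonormality suffices, by part (iii) of Lemma \ref{lem:Obata}).
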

We denote by $\,\widetilde{\;}\,$ the operator associated to a bilinear form with respect to the metric, i.e. $$W = \widetilde{\III},\; A_\xi = \widetilde{\langle\II,\xi\rangle}, \text{ and }\Ric(X,Y) = \langle\,\widetilde{\Ric}\,X,Y\,\rangle.$$ 
\begin{cor}
The operator $W$ is related to the Ricci operator $\widetilde{\Ric}$ of $\cM$ and the Weingarten operator at the mean curvature vector, $A_H$, by
\begin{equation}\label{eq:ObataRicci}
W = k(n-1)\Id + A_H - \widetilde{\Ric}.
\end{equation}
\end{cor}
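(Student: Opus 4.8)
The plan is to read the corollary off directly from Obata's theorem \eqref{eq:Obata} by converting every bilinear form appearing there into its metrically associated operator via the $\,\widetilde{\;}\,$ notation. First I would recall the definition $\III(X,Y)=\langle\,WX,Y\,\rangle$ together with the standard identity relating the second fundamental form to the Weingarten operator, $\langle\,\II(X,Y),\xi\,\rangle=\langle\,A_\xi X,Y\,\rangle$, applied in the direction $\xi=H$, as well as $k(n-1)\langle\,X,Y\,\rangle=\langle\,k(n-1)\Id\,X,\,Y\,\rangle$ and $\Ric(X,Y)=\langle\,\widetilde{\Ric}\,X,Y\,\rangle$. Substituting these into \eqref{eq:Obata} turns it into
\[
\langle\,WX,\,Y\,\rangle=\big\langle\,\big(k(n-1)\Id+A_H-\widetilde{\Ric}\big)X,\;Y\,\big\rangle\qquad\text{for all }X,Y\in\mathfrak{X}(M).
\]

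Then I would invoke the non-degeneracy of the first fundamental form (automatic in the Riemannian setting, and in any case assumed throughout) to cancel the arbitrary $Y$ and conclude the operator identity $W=k(n-1)\Id+A_H-\widetilde{\Ric}$ of \eqref{eq:ObataRicci}. As a consistency check one observes that all four operators involved are self-adjoint with respect to $\langle\,\cdot,\cdot\,\rangle$ — namely $W$ by Lemma \ref{lem:Obata}$(i)$, $A_H$ as a shape operator, $\widetilde{\Ric}$ because the Ricci tensor is symmetric, and $\Id$ trivially — so the passage between symmetric bilinear forms and operators loses no information and is compatible with the symmetry of $\III$.

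There is essentially no obstacle here: the statement is a purely algebraic reformulation of Obata's theorem, and the only point requiring a little care is fixing the sign convention so that $\langle\,\II(X,Y),\xi\,\rangle=\langle\,A_\xi X,Y\,\rangle$ holds with the conventions adopted in Section \ref{subsec:submanifolds}; with those conventions the computation above is immediate.
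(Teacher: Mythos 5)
Your proposal is correct and is exactly the argument the paper intends: the corollary is stated without proof immediately after the definition of the $\,\widetilde{\;}\,$ notation, precisely because it amounts to rewriting each bilinear form in \eqref{eq:Obata} as its metrically associated operator and cancelling the arbitrary $Y$ by non-degeneracy of the metric. Your remarks on self-adjointness and sign conventions are sensible but not needed beyond that.
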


\begin{rmk}
Obata obtains $f^*\sigma$ and defines $\III$ in terms of the second fundamental form using the language of the Maurer-Cartan forms of $Q$. The last corollary justifies the definition and naming of the $W$ operator and the third fundamental form as presented here, which naturally generalize the classical definition for hypersurfaces as the square of the Weingarten operator, cf. \cite[Def. 4.1]{Liu1999}. Moreover \ref{eq:ObataRicci}, already appearing in terms of shape operators in e.g. \cite[eq. 1.4]{Matsuyama1988}, is the generalization to arbitrary codimension of the classical well-known formula  $K\Id - H\II + \III = 0$, ($K$ is the Gaussian curvature), for surfaces in $\mathbb{R}^3$, cf. \cite[eq. 31]{Hartman1953} or \cite[Th. 2.5.1]{toponogov2006}, and for hypersurfaces in $\mathbb{R}^{n+1}$ \cite[Prop. 5.2]{kobayashiII}, $A^2=HA -\widetilde{\Ric}$.
\end{rmk}


\subsection{Overview of the main results}
\label{subsec:statements}

We collect here the statements of our main results to be proved in the rest of the paper (see respectively Sections \ref{subsec:Tvector}, \ref{subsec:curvatureT} and \ref{subsec:curvatureScalar}).
\begin{thm}\label{th:main}
For a submanifold with flat normal bundle in a space form, $x:\mathcal{M}^n\rightarrow \overline{M}(k)^{n+m}$, such that its Gauss-Obata image $f:\mathcal{M}^n\rightarrow Q^{n+m}$ is a smooth submanifold, let $A_j=A_{\xi_j}$, be the Weingarten operators at the parallel orthonormal normal basis vectors $\{\xi_j\}_{j=1}^m$, and $W=\sum_{j=1}^m A^2_j$ the Obata operator of the third fundamental form metric, $\III$, in $f(\cM)$ induced from $Q$. Then the following comparison tensors between the intrinsic induced geometries of $x$ and $f$ are completely determined by the extrinsic geometry of $x$:
\begin{enumerate}
\item The difference of the Levi-Civita connections in $\cM$ of the induced metrics from $Q$ and $\overline{M}(k)$ is
\begin{equation}\label{eq:comparisonT}
\nabla^{(\III)}_XY - \nabla_XY = W^{-1}\sum_{j=1}^m A_j(\nabla_X A_j)Y, \quad \text{for any } X,Y\in \mathfrak{X}(M).
\end{equation}
\item\emph{(Theorema Egregium)} The difference between the Riemann-Christoffel curvature tensor of the Gauss-Obata image $(f(\mathcal{M}),\III)$ and the curvature tensor of the submanifold $(x(\mathcal{M}),I)$ is given by
\begin{align}\label{eq:comparisonR}
\III (R^{(\III)}(X,Y)Z - & R(X,Y)Z, V) \nonumber\\ & = \III( P(X,Y)Z, V) + \III ( T(X,Z),\, T(Y,V) ) -  \III( T(X,V),\, T(Y,Z)),
\end{align}
for all $X,Y,Z,V\in \mathfrak{X}(M)$, where
$
P(X,Y)Z = W^{-1}\sum_{j=1}^m\left( A_j R(X,Y)A_j  + \left[ \nabla_X A_j,\,\nabla_Y A_j \right]\, \right)Z,
$
and $T(X,Y)$ is the right hand side of (\ref{eq:comparisonT}).

\item The scalar curvature of $(f(\mathcal{M}),\III)$ satisfies:
\begin{equation}\label{eq:scalarR}
\mathcal{R}^{(\III)} = k(n-1)\frac{\sigma_{n-1}(W)}{\det(W)} + \tr_{\III}(A_{H}) - n + ||T||^2_{\III} -||\tr_{\III}T||^2_{\III} + \tr_{\III}\tr_c P.
\end{equation}
\end{enumerate}
\end{thm}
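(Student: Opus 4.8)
The plan is to establish the three items in order, since (ii) and (iii) rest on the connection formula (i). For (i), I would write $\nabla^{(\III)}=\nabla+T$, with $T$ a $(1,2)$-tensor that is symmetric because both connections are torsion free. Subtracting the metric-compatibility identity of $\nabla$ with $I$ from that of $\nabla^{(\III)}$ with $\III=\langle W\,\cdot\,,\cdot\,\rangle$ and using $\nabla I=0$ gives $\langle(\nabla_X W)Y,Z\rangle=\III(T(X,Y),Z)+\III(T(X,Z),Y)$. Cyclically permuting $X,Y,Z$ and forming the Koszul combination yields $2\,\III(T(X,Y),Z)=\langle(\nabla_X W)Y,Z\rangle+\langle(\nabla_Y W)Z,X\rangle-\langle(\nabla_Z W)X,Y\rangle$. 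To collapse the right-hand side I would exploit that $C_j(X,Y,Z):=\langle(\nabla_X A_j)Y,Z\rangle$ is totally symmetric in its three arguments: the first equality of (\ref{eq:Codazzi1}) gives the $X\leftrightarrow Y$ symmetry and the second the $Y\leftrightarrow Z$ symmetry. Then $\langle(\nabla_X W)Y,Z\rangle=\sum_j(C_j(X,A_jY,Z)+C_j(X,Y,A_jZ))$, and in the cyclic sum every term whose three arguments form the same multiset cancels, leaving $2\sum_j C_j(X,Y,A_jZ)=2\sum_j\langle A_j(\nabla_X A_j)Y,Z\rangle$. Hence $WT(X,Y)=\sum_j A_j(\nabla_X A_j)Y$, i.e. (\ref{eq:comparisonT}); symmetry of this $T$ is also immediate from $(\nabla_X A_j)Y=(\nabla_Y A_j)X$. (Alternatively one verifies directly that the displayed $T$ is symmetric and makes $\nabla+T$ metric for $\III$, and invokes uniqueness of the Levi-Civita connection.)

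For (ii), I would start from the standard formula for the curvature of the perturbed connection $\nabla^{(\III)}=\nabla+T$, namely $R^{(\III)}(X,Y)Z-R(X,Y)Z=(\nabla_X T)(Y,Z)-(\nabla_Y T)(X,Z)+T(X,T(Y,Z))-T(Y,T(X,Z))$. Writing $WT(Y,Z)=\sum_j A_j(\nabla_Y A_j)Z$, using $\nabla_X W^{-1}=-W^{-1}(\nabla_X W)W^{-1}$ and the product rule of Remark \ref{rmk:productRule}, the expansion $(\nabla_X T)(Y,Z)=\nabla_X(T(Y,Z))-T(\nabla_X Y,Z)-T(Y,\nabla_X Z)$ causes the Christoffel-type corrections to cancel, leaving $(\nabla_X T)(Y,Z)=-W^{-1}(\nabla_X W)T(Y,Z)+W^{-1}\sum_j((\nabla_X A_j)(\nabla_Y A_j)Z+A_j(\nabla^2_{X,Y}A_j)Z)$. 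Antisymmetrizing in $X,Y$, the mixed terms produce $\sum_j[\nabla_X A_j,\nabla_Y A_j]$, while the Ricci identity $\nabla^2_{X,Y}A_j-\nabla^2_{Y,X}A_j=[R(X,Y),A_j]$ (commutation of covariant derivatives of the $(1,1)$-tensor $A_j$) produces $\sum_j A_j R(X,Y)A_j$ together with $\sum_j A_j^2 R(X,Y)=WR(X,Y)$; the latter combines with the explicit $R(X,Y)Z$, and these ingredients assemble into $W\,P(X,Y)Z$.

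The step I expect to be the main obstacle is showing that, after pairing with $V$ via $\III(\,\cdot\,,V)=\langle W\,\cdot\,,V\rangle$, the residual terms — the two $W^{-1}(\nabla W)T$ contributions and the $T\circ T$ contributions — collapse into the clean quadratic expression on the right of (\ref{eq:comparisonR}), which has the shape of a generalized Kulkarni--Nomizu product. The crucial input is the pointwise identity $\III(T(X,U),V)-\langle(\nabla_X W)U,V\rangle=-\,\III(T(X,V),U)$, a short computation from the total symmetry of $C_j$ and the self-adjointness of $W$ and $\nabla_X W$ (Lemma \ref{lem:Obata}). Applying it with $U=T(Y,Z)$ turns $\III(T(X,T(Y,Z)),V)-\langle(\nabla_X W)T(Y,Z),V\rangle$ into $-\III(T(X,V),T(Y,Z))$, and likewise with $X,Y$ interchanged; collecting all contributions then gives (\ref{eq:comparisonR}). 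The difficulty is entirely organizational — the many terms generated by the product rule, and keeping the contractions straight — since every cancellation is forced by the Codazzi symmetries of (\ref{eq:Codazzi1}) and the Ricci identity.

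For (iii), I would contract (\ref{eq:comparisonR}) twice with respect to $\III$ (equivalently, insert $W^{-1}$ in the traced slots): a $c$-contraction to obtain the Ricci tensor of $\III$ and then a $\III$-trace to obtain $\mathcal{R}^{(\III)}$. The $\III(R(X,Y)Z,V)$ term double-traces to $\tr(W^{-1}\widetilde{\Ric})$; substituting $\widetilde{\Ric}=k(n-1)\Id+A_H-W$ from (\ref{eq:ObataRicci}) and using $\tr(W^{-1})=\sigma_{n-1}(W)/\det(W)$ yields the first three summands $k(n-1)\sigma_{n-1}(W)/\det(W)+\tr_{\III}(A_H)-n$. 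The quadratic-in-$T$ part double-traces to $\|T\|^2_{\III}-\|\tr_{\III}T\|^2_{\III}$, with $\tr_{\III}T=\sum_a T(E_a,E_a)$ over an $\III$-pseudo-orthonormal frame, and the $P$-term is recorded as $\tr_{\III}\tr_c P$; adding these gives (\ref{eq:scalarR}). This part is routine bookkeeping together with the input from Obata's corollary, so the genuine content of the theorem lies in (ii).
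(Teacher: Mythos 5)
Your part (i) is correct and is essentially the paper's own route: the Koszul formula for $\nabla^{(\III)}$ plus the Codazzi symmetries of \ref{eq:Codazzi1}; packaging the cancellation through the totally symmetric cubic form $C_j(X,Y,Z)=\langle(\nabla_XA_j)Y,Z\rangle$ is a clean reorganization of the same computation. Part (iii) is likewise the paper's argument (double $\III$-contraction, $\sum_\nu w_\nu^{-1}=\sigma_{n-1}(W)/\det W$, and Obata's relation \ref{eq:ObataRicci}), and stands or falls with (ii).

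The genuine gap is in (ii), at precisely the step you wave through with ``these ingredients assemble into $WP(X,Y)Z$.'' Your Ricci identity is the correct one, and it gives
$\sum_j A_j(\nabla^2_{X,Y}A_j-\nabla^2_{Y,X}A_j)=\sum_j A_j[R(X,Y),A_j]=\sum_j A_jR(X,Y)A_j-WR(X,Y)$.
After applying $W^{-1}$ the second piece is exactly $-R(X,Y)Z$, and when substituted into the perturbation formula $R^{(\III)}-R=(\nabla T)\text{-terms}+(T\circ T)\text{-terms}$ it cancels the $-R(X,Y)Z$ already present on the left. So what your computation actually establishes is
\begin{equation*}
\III( R^{(\III)}(X,Y)Z,V)=\III( P(X,Y)Z,V)+\III( T(X,Z),T(Y,V))-\III( T(X,V),T(Y,Z)),
\end{equation*}
with no $R$ subtracted on the left --- which differs from the displayed identity \ref{eq:comparisonR} by the term $\III(R(X,Y)Z,V)$, and that term is not zero in general. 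The round sphere of radius $r$ in $\mathbb{R}^{n+1}$ is a decisive test: there $\nabla A=0$, $T=0$, $\III=r^{-2}I$, so $R^{(\III)}=R$ as $(1,3)$-tensors and the left side of \ref{eq:comparisonR} vanishes, while $\III(P(X,Y)Z,V)=\III(R(X,Y)Z,V)\neq 0$. (The paper's own proof reaches the printed form by evaluating $\nabla_X\nabla_YA_j-\nabla_Y\nabla_XA_j-\nabla_{[X,Y]}A_j$ as $R(X,Y)A_j$ rather than as the commutator $[R(X,Y),A_j]$; that is exactly where your derivation and the printed statement part ways, and your version of the identity is the defensible one.) The discrepancy propagates to (iii): double-tracing the identity your argument yields gives $\mathcal{R}^{(\III)}=\tr_{\III}\tr_c P+||T||^2_{\III}-||\tr_{\III}T||^2_{\III}$, and the extra three terms in \ref{eq:scalarR} are precisely the double trace of the dropped $R$-term (on the round sphere, \ref{eq:scalarR} returns $2n(n-1)$ against the true value $n(n-1)$). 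To close the gap you must either carry the $-WR(X,Y)Z$ term through explicitly and state the corrected identity, or show it vanishes --- and it does not.
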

Here $H=\tr_I I\!I$ is the mean curvature vector of $x(\mathcal{M})$, $\sigma_k$ is the $k$-th elementary symmetric polynomial on the eigenvalues of an operator, $||\cdot||_g$ is the induced norm by a metric $g$, $\tr_g$ is the trace of an operator or bilinear form with respect to the inner product $g$, and $\tr_c P=\tr(V\mapsto P(V,X)Y)$ is the Ricci contraction.
In fact, the dependencies on the $(\nabla_X A_j)Y$ of any of these formulas can in principle be written in terms only of the principal curvatures and principal normal vectors of $x(\cM)$, and $\nabla_XY$. We show explicit formulas for $T(X,Y)$ in Section \ref{subsec:principalNormals}.


There is an alternative expression of equation \ref{eq:comparisonR} with the curvature decomposed in terms of algebraic curvature tensors coming from a generalized Kulkarni-Nomizu product: by splitting up the tensor $P$ into a part corresponding to the curvature of the Weingarten operators, and a part corresponding to the commutator term rewritten using vector-valued bilinear forms (see Sec. \ref{subsec:curvatureKN}), each respectively given by:
$$
L(X,Y)=W^{-1}\sum_{j=1}^m A_j R(X,Y)A_j,\quad\, J_j(X,Y)=W^{-\frac{1}{2}}(\nabla_X A_j)Y,\quad\,\text{ for all }X,Y\in \mathfrak{X}(M).$$
\begin{cor}\label{cor:main}
The curvature tensor of the Gauss map of a normally flat submanifold in a space form with null extrinsic zero index, admits a decomposition in terms of the original curvature, the curvature of the shape operators, and generalized Kulkarni-Nomizu products determined by these operators:
\begin{equation}\label{eq:Roperators}
\boxed{
R^{(\III)} = R + L + \widehat{T\KN_{\III} T} - \sum_{j=1}^m \widehat{J_j\KN_{\III} J_j}
}
\end{equation}
\end{cor}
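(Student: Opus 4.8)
The plan is to read Corollary~\ref{cor:main} as a mere repackaging of part~(2) of Theorem~\ref{th:main}: once \eqref{eq:comparisonR} is established, \eqref{eq:Roperators} will follow by regrouping its right-hand side and recognizing the three resulting blocks as $L$, $\widehat{T\KN_{\III}T}$ and the $\widehat{J_j\KN_{\III}J_j}$. Since $\III$ is non-degenerate by the standing assumption, it will be enough to prove the identity after pairing the free vector $V$ through $\III$; that is, to verify that for all $X,Y,Z,V\in\mathfrak{X}(M)$
\[
\III\!\big(R^{(\III)}(X,Y)Z-R(X,Y)Z,\,V\big)=\III(L(X,Y)Z,V)+\III\!\big(\widehat{T\KN_{\III}T}(X,Y)Z,V\big)-\sum_{j=1}^{m}\III\!\big(\widehat{J_j\KN_{\III}J_j}(X,Y)Z,V\big),
\]
and then to match this equation against \eqref{eq:comparisonR} term by term.

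For the matching I would first split $P(X,Y)Z=L(X,Y)Z+W^{-1}\sum_{j}[\nabla_XA_j,\nabla_YA_j]Z$ and compute $\III(P(X,Y)Z,V)=\langle WP(X,Y)Z,V\rangle$. The $\sum_jA_jR(X,Y)A_j$ piece is, by definition of $L$, exactly $\langle WL(X,Y)Z,V\rangle=\III(L(X,Y)Z,V)$, which accounts for the first term on the right. For the commutator piece I would use the self-adjointness of each $A_j$ and of each $\nabla_XA_j$ (the second Codazzi symmetry in \eqref{eq:Codazzi1}) to move operators across $\langle\,\cdot\,,\cdot\,\rangle$, together with the fact that $W$ is positive definite — which holds because $\III(X,X)=\sum_j\langle A_jX,A_jX\rangle$ and $\nu=0$, so $W^{1/2}$ is a well-defined positive self-adjoint operator commuting with $W$ and $W^{-1}$. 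This rewrites $\langle\sum_j[\nabla_XA_j,\nabla_YA_j]Z,V\rangle$ as $\sum_j\big(\III(J_j(Y,Z),J_j(X,V))-\III(J_j(X,Z),J_j(Y,V))\big)$, after noting $\III(J_j(X,Z),J_j(Y,V))=\langle(\nabla_XA_j)Z,(\nabla_YA_j)V\rangle$; by the symmetry of $\III$ and the definitions of $\KN_{\III}$ and of the hat operation from Section~\ref{subsec:curvatureKN}, this equals $-\sum_j\III(\widehat{J_j\KN_{\III}J_j}(X,Y)Z,V)$. Finally, the two remaining pairings in \eqref{eq:comparisonR}, namely $\III(T(X,Z),T(Y,V))-\III(T(X,V),T(Y,Z))$, combine — again via the symmetry of $\III$, and using the symmetry $T(X,Y)=T(Y,X)$ and $J_j(X,Y)=J_j(Y,X)$ coming from the first Codazzi symmetry $(\nabla_XA_j)Y=(\nabla_YA_j)X$ — into $\III(\widehat{T\KN_{\III}T}(X,Y)Z,V)$. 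Substituting everything back into \eqref{eq:comparisonR} gives the displayed equation, hence \eqref{eq:Roperators}.

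The step I expect to be the main obstacle is the commutator term together with the combinatorial bookkeeping of the Kulkarni-Nomizu symmetrization: $[\nabla_XA_j,\nabla_YA_j]$ is manifestly antisymmetric in $(X,Y)$, whereas $\widehat{J_j\KN_{\III}J_j}$ is assembled from the \emph{symmetric} vector-valued form $J_j$, so one must check that the symmetrized object produced by the generalized product reproduces precisely that antisymmetric commutator. This is exactly where all of the available structure is needed at once — the two Codazzi symmetries of \eqref{eq:Codazzi1}, the self-adjointness of $W$ and of the $A_j$, and the symmetry of $\III$ — and where the normalization constants in the definitions of $\KN_{\III}$ and of $\widehat{\cdot}$ (Section~\ref{subsec:curvatureKN}) must be tracked carefully to avoid spurious factors of $2$. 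A secondary point, which is automatic once the generalized Kulkarni-Nomizu product is in place, is that each summand $L$, $\widehat{T\KN_{\III}T}$ and $\widehat{J_j\KN_{\III}J_j}$ inherits the algebraic curvature symmetries, so that \eqref{eq:Roperators} is genuinely an identity among curvature-type tensors rather than merely among $(1,3)$-tensors.
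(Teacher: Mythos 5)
Your proposal is correct and follows essentially the same route as the paper: split $P$ into $L$ plus the commutator part, use self-adjointness of the $\nabla_XA_j$ (Codazzi) and of $W^{\pm 1/2}$ to rewrite the commutator pairing as $\sum_j\bigl(\III(J_j(X,V),J_j(Y,Z))-\III(J_j(X,Z),J_j(Y,V))\bigr)=-\sum_j(J_j\KN_{\III}J_j)(X,Y,Z,V)$, and recognize the remaining $T$-pairings as $(T\KN_{\III}T)(X,Y,Z,V)$. Your extra checks (positive-definiteness of $W$ from $\nu=0$ so that $W^{1/2}$ exists, and the absence of spurious factors of $2$ when $h=k$ in Definition~\ref{def:KN}) are sound and only make explicit what the paper leaves implicit.
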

Here $\widehat{S}$ is the $(1,3)$-tensor corresponding to a $(0,4)$-tensor $S$ with respect to the metric $\III$ via $$\III( \widehat{S}(X,Y)Z, V )=S(X,Y,Z,V),\quad\text{ for all } X,Y,Z,V\in \mathfrak{X}(M),$$
and $\KN_{g}$ is the generalized Kulkarni-Nomizu product, see Definition \ref{def:KN}, between bilinear forms valued in an inner product space $(E,g)$. In our case $T$ and $J_j$ can be identified with elements of $S^2(T^*_p\mathcal{M})\otimes T_p\mathcal{M}$, where the inner product space is $(T_p\mathcal{M},\III)$.
\begin{rmk}
Formulas \ref{eq:comparisonR} and \ref{eq:Roperators} for the curvature of the Gauss image in terms of the submanifold shape operators play the role of a \emph{theorema egregium}, analogous to the Gauss equation for the curvature of the original submanifold in terms of the second fundamental form. Indeed, both provide the difference of intrinsic curvature invariants, i.e. the Riemann-Christoffel tensors, in terms of a combination of extrinsic invariants of the submanifold: the Gauss equation in terms of combinations of $A_j$ components via products of $\II$, our results in terms of products of $A_j$ and $\nabla A_j$ within the tensors involved. This is perhaps most clear when writing the Gauss equation in terms of the generalized Kulkarni-Nomizu product:
\begin{equation}\label{eq:GaussKN}
\overline{R} = R + \widehat{\II\KN_I \II},
\end{equation}
so that the analogy to the decomposition of equation \ref{eq:Roperators} is manifest, considering that $\II$ plays a similar role with respect to the covariant derivatives as $T(X,Y) = \nabla^{(\III)}_XY -\nabla_XY$ does, since $\II(X,Y) = \overline{\nabla}_XY -\nabla_XY$. The other tensors $L$ and $J_j$ reflect the higher complexity of relating the geometry of two submanifolds with different metrics in different ambient manifolds, $(x(\mathcal{M}),I)$ in $\overline{M}(k)^{n+m}$ and $(f(\mathcal{M}),\III)$ in $G(n+m)/G(n)\times O(m)$, instead of one submanifold and its ambient space with the same metric.
\end{rmk}
\begin{rmk}
Notice the following interpretation of each term: the vector-valued bilinear form $T$ is an obstruction to the first and third fundamental forms inducing the same Levi-Civita connection. The tensor $L$ extracts all the dependencies of $R^{(\III)}$ on the second order covariant derivatives of the shape operators precisely only in terms of the curvature of these operators. If the original submanifold is locally flat, $R+L$ can be shown to be zero, so the algebraic curvature tensor in the $J_j$ represents an independent obstruction to the Gauss image being also locally flat.
\end{rmk}

\section{Comparison of the induced Levi-Civita connections}
\label{sec:connections}

In this section we determine the vector difference of the covariant derivatives induced by the first and third fundamental forms in order to prove part $(i)$ of the main Theorem \ref{th:main}. This can be obtained from simplifying \cite[Prop. 3.1]{Vlachos2002} in the case of submanifolds with flat normal bundle in Euclidean space. We include a full proof for completeness and to show how the normally flat condition simplifies the derivation. We also provide the explicit expression of this vector in terms of the principal curvatures and principal normal vectors.


\subsection{Connection difference in terms of the Weingarten operators}
\label{subsec:Tvector}
\begin{defi}
Let us denote by $T$ the vector-valued symmetric bilinear form $T\in S^2(T^*\cM)\otimes T\cM$ measuring the difference between the Levi-Civita connections induced by the third and first fundamental forms:
\begin{equation}
T(X,Y) = \nabla^{(\III)}_XY -\nabla_XY.
\end{equation}
\end{defi}

{\bf Proof of Theorem \ref{th:main} (i)}
Let us recall that both $\nabla$ and $\nabla^{(\III)}$ are torsion-free so that
\begin{equation}\label{eq:brackets}
[X,Y] = \nabla^{(\III)}_XY -  \nabla^{(\III)}_YX = \nabla_XY -  \nabla_YX ,
\end{equation}
which implies that $T$ is symmetric, $T(X,Y)=T(Y,X)$. Moreover, the connections are metric compatible, thus 
\begin{align}
& X(\III(Y,Z)) = \III(\nabla^{(\III)}_XY,Z) +  \III(\nabla^{(\III)}_XY,Z), \text{ and} \\ 
& X\langle Y,Z\rangle = \langle\nabla_XY,Z\rangle + \langle\nabla_XY,Z\rangle.\label{eq:compatible}
\end{align}
Now, Koszul's formula can be used to define the unique connection satisfying the previous properties, hence
\begin{align*}
2\III(\nabla^{(\III)}_XY,Z) & = X\III(Y,Z) + Y\III(X,Z) - Z\III(X,Y) - \III([Y,X],Z) - \III([X,Z],Y)- \III([Y,Z],X),
\end{align*}
which expanding the metric in terms of the Obata operator (for ease of notation, let us write $A_j=A_{\xi_j}$)
\begin{align*}
2\sum_{j=1}^m \langle A_j^2(\nabla^{(\III)}_XY),Z\rangle & = \sum_{j=1}^m\left( X\langle A^2_jY,Z \rangle + Y\langle A^2_jX,Z \rangle - Z\langle A^2_j X,Y \rangle  \right. \\
& \quad\quad\quad\quad \left. - \langle A^2_j[Y,X],Z \rangle - \langle A^2_j[X,Z],Y \rangle - \langle A^2_j[Y,Z],X \rangle\right) .
\end{align*}
By expanding the the first three terms of the right hand side with eq. \ref{eq:compatible}, the Lie brackets with eq. \ref{eq:brackets}, and making use of the self-adjoint property of the $A_j$, one can cancel and collect terms to arrive at:
\begin{align*}
2\sum_{j=1}^m \langle A_j^2(\nabla^{(\III)}_XY),Z\rangle & = \sum_{j=1}^m\left( \langle\nabla_X(A_j^2Y),Z\rangle +\langle\nabla_Y(A_j^2X),Z\rangle  - \langle\nabla_Z(A_j^2X),Y\rangle \right. \\
& \quad\quad\quad\quad \left. - \langle \nabla_YX-\nabla_XY, A^2_jZ \rangle + \langle \nabla_ZX,A_j^2Y \rangle \right),
\end{align*}
which in terms of the Obata operator is
\begin{align*}
2\III(\nabla^{(\III)}_XY,Z) & = \langle \nabla_X(WY),Z\rangle + \langle \nabla_Y(WX),Z\rangle - \langle \nabla_Z(WX),Y\rangle - \langle W(\nabla_YX-\nabla_XY), Z \rangle + \langle W\nabla_ZX, Y \rangle.
\end{align*}
Expanding the first three terms of the right hand side using $\langle \nabla_X(WY),Z\rangle = \langle W(\nabla_XY),Z\rangle + \langle(\nabla_XW)Y,Z\rangle$, and canceling terms, we obtain
\begin{align*}
2\III(\nabla^{(\III)}_XY,Z) & = 2\langle W(\nabla_XY),Z\rangle + \langle(\nabla_XW)Y,Z\rangle - \langle(\nabla_ZW)X,Y\rangle +\langle(\nabla_YW)X,Z\rangle,
\end{align*}
that is to say, by definition of $T$:
\begin{equation}\label{eq:tempT}
 2\langle W T(X,Y),Z \rangle = \;\langle(\nabla_XW)Y,Z\rangle - \langle(\nabla_ZW)X,Y\rangle +\langle(\nabla_YW)X,Z\rangle.
\end{equation}
But, employing equation \ref{eq:Codazzi1} and self-adjointness, the second term of the right hand side is
\begin{align*}
\langle \nabla_Z\left(\sum_{j=1}^m A_j^2\right)X, Y \rangle & = \sum_{j=1}^m \langle\;\left( (\nabla_Z A_j)A_j+A_j(\nabla_Z A_j) \right) X, Y \rangle \\
& = \sum_{j=1}^m \langle\; (\nabla_ZA_j)Y, A_jX\rangle  + \langle\; (\nabla_XA_j)Z, A_jY\rangle \\ 
& = \sum_{j=1}^m \langle\; (\nabla_YA_j)Z, A_jX\rangle  + \langle\; (\nabla_XA_j)(A_jY), Z\rangle \\ 
& = \sum_{j=1}^m \langle\; (\nabla_YA_j)(A_jX),Z \rangle  + \langle\; (\nabla_XA_j)(A_jY), Z\rangle.
\end{align*}
Then, since substituting in equation \ref{eq:tempT} yields an equation with all terms paired with $\langle\cdot,Z\rangle$, for all $Z\in\mathfrak{X}(\cM)$, we must have that:
\begin{align*}
2WT(X,Y) & = (\nabla_XW)Y + (\nabla_YW)X - \sum_{j=1}^m\left[\; (\nabla_YA_j)(A_jX) + (\nabla_XA_j)(A_jY)\;\right] \\
& = \sum_{j=1}^m\left[\; A_j(\nabla_XA_j)Y + A_j(\nabla_YA_j)X \;\right]
\end{align*}
from the covariant derivative of $W$ expressed as the derivative of a sum of squares and canceling one of the terms. Therefore, applying the Codazzi symmetry again and solving for $T$ by inverting $W$, since we are assuming regularity of $W$, equation \ref{eq:comparisonT} follows.
\qed

\begin{rmk}
Expression \ref{eq:comparisonT} is given explicitly in terms of the Weingarten operators at a parallel normal orthonormal basis $\xi$, that always exists for a normally flat submanifold \cite[cf. Ch. 4, Prop. 1.1 ]{Chen2019}, but the left hand side is independent of it. Indeed, it can be easily shown that for a new basis given by an orthonormal linear combination of the especial basis $\xi$, the right hand side is independent of the basis.
\end{rmk}


\subsection{Expression in terms of principal curvatures and principal normal vectors}
\label{subsec:principalNormals}

We shall express the covariant derivative of Weingarten operators at a normal vector $\xi$ in terms of the covariant derivative of the vectors involved, the principal curvatures $\lambda_i(\xi)$, and principal normal vectors $\eta_i$, $i=1,\dots, s$, following the notation of Sec. \ref{subsec:submanifolds}. The objective of this is to show that, although complicated in practice, the tensors involved in the \emph{theorema-egregium}-type formula \ref{eq:comparisonR} are in principle dependent only on the aforementioned quantities, i.e., the curvature of the Gauss image is given by the curvature of the original submanifold and an expression in terms only of the vectors covariant derivative and the principal curvatures and principal normal vectors of the original submanifold, which are purely extrinsic measures of the embedding's geometry.

\begin{lem}\label{lem:shapeprinc}
The covariant derivatives of the Weingarten operators $A_\xi$ of $x:\cM\rightarrow\overline{M}(k)$, at a parallel normal vector $\xi$,  expand in terms of the principal curvatures and principal normal vectors as:
\begin{equation}\label{eq:covprincipal1}
(\nabla_X A_\xi)Y = \sum_{i=1}^s\left( X(\lambda_i(\xi))\proj_i Y + \langle\xi,\eta_i\rangle\left[\nabla_X,\proj_i\right]Y \right),
\end{equation} 
where $\left[\nabla_X,\proj_i\right]=\nabla_X\circ\proj_i -\proj_i\circ\nabla_X$.
Focusing on the action on the components $X_i, Y_j, Z_k$, from the subspaces $E_i, E_j$ and $E_k$, this specializes to:
\begin{align}\label{eq:covprincipal2}
\langle (\nabla_{X_i} A_\xi)Y_j, Z_k\rangle = 
\left\{
\begin{array}{ll}
\langle \xi,\eta_j-\eta_k \rangle\langle \nabla_{X_i} Y_j, Z_k\rangle \quad\text{ if $j\neq k$}, \\
X_i(\lambda_j(\xi))\langle Y_j, Z_j\rangle\quad\quad\quad\;\, \text{ if $j = k$.}
\end{array}\right.
\end{align}
\end{lem}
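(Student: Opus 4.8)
The plan is to compute $(\nabla_X A_\xi)Y$ directly from the explicit form \eqref{eq:Aprincipals} of the Weingarten operator, exploiting that the product rule \eqref{eq:Aderivative} is available because $\xi$ is parallel in the normal connection. Writing $A_\xi = \sum_{i=1}^s \lambda_i(\xi)\proj_i$, with $\lambda_i(\xi) = \langle\eta_i,\xi\rangle$ and $\proj_i$ the orthogonal projection onto the principal subspace $E_i$, the first step is just the Leibniz rule for the Levi-Civita connection: $\nabla_X(A_\xi Y) = \sum_i X(\lambda_i(\xi))\proj_i Y + \sum_i \lambda_i(\xi)\,\nabla_X(\proj_i Y)$. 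Subtracting $A_\xi\nabla_X Y = \sum_i \lambda_i(\xi)\proj_i(\nabla_X Y)$ and collecting the last two sums into the commutator $[\nabla_X,\proj_i] = \nabla_X\circ\proj_i - \proj_i\circ\nabla_X$ yields \eqref{eq:covprincipal1} immediately. I would note here that $\lambda_i(\xi)$ and $\proj_i$ are smooth exactly on the subdomain where $s = s(p)$ and the multiplicities $\dim E_i$ are locally constant, which is the regularity context of the standing Assumption, and that $\proj_i$ does not in general commute with $\nabla_X$ — that non-commutativity is precisely what the commutator term records.

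To obtain the componentwise formula \eqref{eq:covprincipal2} I would pair \eqref{eq:covprincipal1} with $Z_k\in E_k$ and use idempotency, orthogonality and self-adjointness of the projections: $\proj_l Y_j = \delta_{lj}Y_j$, $E_j\perp E_k$ for $j\neq k$, and $\langle\proj_l V, W\rangle = \langle V,\proj_l W\rangle$. The first sum in \eqref{eq:covprincipal1} collapses to $X_i(\lambda_j(\xi))\langle Y_j, Z_k\rangle$, which is $0$ for $j\neq k$ and $X_i(\lambda_j(\xi))\langle Y_j,Z_j\rangle$ for $j = k$. For the commutator sum, moving $\proj_l$ onto $Z_k$ by self-adjointness gives $\langle[\nabla_{X_i},\proj_l]Y_j, Z_k\rangle = (\delta_{lj} - \delta_{lk})\langle\nabla_{X_i}Y_j, Z_k\rangle$; multiplying by $\langle\eta_l,\xi\rangle$ and summing over $l$ produces $\langle\xi,\eta_j - \eta_k\rangle\langle\nabla_{X_i}Y_j,Z_k\rangle$ when $j\neq k$ and $0$ when $j = k$. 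Adding the two contributions in each case gives the two branches of \eqref{eq:covprincipal2}.

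I do not expect any serious obstacle: the lemma is essentially bookkeeping once \eqref{eq:Aderivative} is in hand. The only point demanding a little care is that the decomposition $T_p\cM = E_1\oplus\cdots\oplus E_s$ varies with $p$, so that $\nabla_X(\proj_i Y)$ genuinely differs from $\proj_i\nabla_X Y$; this is handled cleanly by carrying the commutator $[\nabla_X,\proj_i]$ rather than attempting to evaluate it. Optionally one may remark that, since $\xi$ is parallel, $X(\lambda_i(\xi)) = \langle\nabla^\perp_X\eta_i,\xi\rangle$, which re-expresses the diagonal term of \eqref{eq:covprincipal2} through the covariant derivative of the principal normal fields alone, though this is not needed for the statement as written.
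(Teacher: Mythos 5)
Your proof is correct and follows essentially the same route as the paper: equation \eqref{eq:covprincipal1} is obtained identically by applying the Leibniz rule to $A_\xi=\sum_i\langle\xi,\eta_i\rangle\proj_i$ via \eqref{eq:Aderivative}, and your derivation of \eqref{eq:covprincipal2} by pairing \eqref{eq:covprincipal1} with $Z_k$ and using $\langle[\nabla_{X_i},\proj_l]Y_j,Z_k\rangle=(\delta_{lj}-\delta_{lk})\langle\nabla_{X_i}Y_j,Z_k\rangle$ is a trivially equivalent variant of the paper's argument, which instead differentiates the scalar identities $\langle A_\xi Y_j,Z_k\rangle=\lambda_j(\xi)\delta_{jk}\langle Y_j,Z_j\rangle$ directly. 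No gaps; your remarks on the local constancy of $s$ and the multiplicities, and on the non-commutativity of $\proj_i$ with $\nabla_X$, are consistent with the paper's standing assumptions.
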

\begin{proof}
From equations \ref{eq:Aprincipals} and \ref{eq:Aderivative} one obtains \ref{eq:covprincipal1}:
$$
\nabla_X(A_\xi Y) - A_\xi\nabla_X Y = \sum_{i=1}^s\left( X\langle\xi,\eta_i\rangle\proj_i Y+ \langle\xi,\eta_i\rangle\nabla_X(\proj_i Y) \right) - \sum_{i=1}^s \langle\xi,\eta_i\rangle\proj_i[\nabla_XY].
$$
The case with $j=k$ in \ref{eq:covprincipal2} is obtained by using \ref{eq:principalCurvatures} and \ref{eq:Aprincipals} while differentiating $\langle A_\xi Y_j, Z_j \rangle = \langle \xi, \eta_j \rangle\langle Y_j, Z_j\rangle$. The case $j\neq k$ is obtained from $\langle A_\xi Y_j, Z_k \rangle =0$, by orthogonality of the $E_j, E_k$ subspaces and differentiating, using self-adjointness:
\begin{align*}
\nabla_{X_i}\langle A_\xi Y_j, Z_k \rangle= \langle (\nabla_{X_i}A_\xi)Y_j,Z_k \rangle +\langle \xi, \eta_k \rangle\langle \nabla_{X_i}Y_j,Z_k\rangle +\langle \xi, \eta_j \rangle\langle Y_j, \nabla_{X_i}Z_k\rangle,
\end{align*}
notice that in the last term $\langle Y_j, \nabla_{X_i}Z_k\rangle=-\langle \nabla_{X_i}Y_j,Z_k\rangle$, by differentiating $\langle Y_j,Z_k\rangle=0$ by $X_i$.
\end{proof}

\begin{rmk}
The Codazzi symmetries of equation \ref{eq:Codazzi1} translate into symmetries of the principal curvatures and principal normals in equation \ref{eq:covprincipal2}.
\end{rmk}

\begin{prop}\label{prop:Tinprinc}
The vector-valued symmetric bilinear form $T$ can be expressed in terms of the principal curvature normal vectors of the Weingarten operators of $x:\cM\rightarrow\overline{M}(k)$ as:
\begin{equation}\label{eq:Texplicit1}
T(X,Y) = \sum_{i=1}^s\left( X(\log ||\eta_i||_g)\proj_i Y + \sum_{k=1}^s\frac{\langle\eta_i,\eta_k\rangle}{\langle\eta_k,\eta_k\rangle}\proj_k\circ\left[\nabla_X,\proj_i\right] Y\right).
\end{equation}
for any $X,Y\in \mathfrak{X}(\cM)$. Moreover, focusing on the action on the components $X_i, Y_j, Z_k$ from the subspaces $E_i, E_j$ and $E_k$, this specializes to:
\begin{align}\label{eq:Texplicit2}
\langle\,T(X_i,Y_j),Z_k\rangle = 
\left\{
\begin{array}{ll}\displaystyle
\frac{\langle \eta_j-\eta_k,\eta_k \rangle}{\langle\eta_k,\eta_k\rangle}\langle\,\nabla_{X_i}Y_j,Z_k\,\rangle,\quad\text{ if $j\neq k$} \\ \,\\
X_i(\log||\eta_j||)\langle Y_j,Z_j\rangle,\quad\quad\quad\;\text{ if $j= k$}
\end{array}
\right.
\end{align}
where $\{V_{k\mu}\}_{\mu=1}^{\dim E_k}$ are the $\III$-orthornomal basis of $E_k$ formed by the corresponding eigenvectors of $W$.
\end{prop}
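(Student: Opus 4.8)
The plan is to combine two results already established: the expression $T(X,Y)=W^{-1}\sum_{j=1}^m A_j(\nabla_XA_j)Y$ from part (i) of Theorem \ref{th:main}, and the principal-data expansion of $(\nabla_XA_j)Y$ from Lemma \ref{lem:shapeprinc}, and then to collapse the sum over the normal index $j$. Throughout I would work with the parallel orthonormal normal frame $\{\xi_j\}_{j=1}^m$ simultaneously diagonalizing the $A_j$, which exists because the submanifold is normally flat. Two elementary identities carry the argument: Parseval for $\eta_i,\eta_k$ in this frame gives $\sum_{j=1}^m\langle\xi_j,\eta_i\rangle\langle\xi_j,\eta_k\rangle=\langle\eta_i,\eta_k\rangle$, and differentiating its case $i=k$ gives $\sum_{j=1}^m\langle\xi_j,\eta_i\rangle\,X\langle\xi_j,\eta_i\rangle=\tfrac12 X\|\eta_i\|^2$. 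A first use of the former: squaring \eqref{eq:Aprincipals} with $\proj_k\proj_i=\delta_{ki}\proj_i$ yields $A_\xi^2=\sum_i\langle\xi,\eta_i\rangle^2\proj_i$, hence $W=\sum_{i=1}^s\|\eta_i\|^2\proj_i$ and, since $\nu=0$ forces every $\|\eta_i\|>0$ (an $E_i$ with $\eta_i=0$ would lie in the relative nullity), $W^{-1}=\sum_{i=1}^s\|\eta_i\|^{-2}\proj_i$. In particular $W$ acts as $\|\eta_i\|^2\Id$ on $E_i$, so $T_p\cM=E_1\oplus\dots\oplus E_s$ is orthogonal for both $\langle\cdot,\cdot\rangle$ and $\III$, which is why the $\III$-orthonormal eigenbasis $\{V_{k\mu}\}$ of $W$ adapted to this splitting is the natural frame.

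Next I would substitute \eqref{eq:covprincipal1} and \eqref{eq:Aprincipals} into $\sum_j A_j(\nabla_XA_j)Y$ and collapse $\proj_k\proj_i$: the term carrying $X(\lambda_i(\xi_j))=X\langle\xi_j,\eta_i\rangle$ contributes $\sum_k\big(\sum_j\langle\xi_j,\eta_k\rangle X\langle\xi_j,\eta_k\rangle\big)\proj_kY=\sum_k\tfrac12 X\|\eta_k\|^2\,\proj_kY$, and the term carrying $[\nabla_X,\proj_i]$ contributes $\sum_{k,i}\big(\sum_j\langle\xi_j,\eta_k\rangle\langle\xi_j,\eta_i\rangle\big)\proj_k[\nabla_X,\proj_i]Y=\sum_{k,i}\langle\eta_k,\eta_i\rangle\proj_k[\nabla_X,\proj_i]Y$. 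Applying $W^{-1}$ then replaces $\tfrac12 X\|\eta_k\|^2$ by $X(\log\|\eta_k\|)$ and divides the second term by $\|\eta_k\|^2$, which is \eqref{eq:Texplicit1} after renaming indices.

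For the componentwise formula \eqref{eq:Texplicit2} I would pair \eqref{eq:Texplicit1} with $Z_k$ and set $X=X_i$, $Y=Y_j$. Since $\proj_lY_j=\delta_{lj}Y_j$, the first sum becomes $X_i(\log\|\eta_j\|)\langle Y_j,Z_k\rangle$, which vanishes unless $j=k$. For the commutator sum I would expand $[\nabla_{X_i},\proj_l]Y_j=\delta_{lj}\nabla_{X_i}Y_j-\proj_l\nabla_{X_i}Y_j$ and use $\langle\proj_pV,Z_k\rangle=\delta_{pk}\langle V,Z_k\rangle$: only the outer index $p=k$ survives, and then only the inner indices $l=k$ and $l=j$ give nonzero terms, producing $\big(\langle\eta_j,\eta_k\rangle/\langle\eta_k,\eta_k\rangle-1\big)\langle\nabla_{X_i}Y_j,Z_k\rangle=\frac{\langle\eta_j-\eta_k,\eta_k\rangle}{\langle\eta_k,\eta_k\rangle}\langle\nabla_{X_i}Y_j,Z_k\rangle$ when $j\neq k$, and $0$ when $j=k$; collecting the two cases gives \eqref{eq:Texplicit2}. (Alternatively, the same case split follows by inserting the already-simplified \eqref{eq:covprincipal2} directly into $T=W^{-1}\sum_jA_j(\nabla_XA_j)Y$.) The only real obstacle is the bookkeeping: three independent families of Kronecker deltas arise---from $\proj_k\proj_i$, from differentiating $\proj_lY_j$, and from adjointness of orthogonal projections---and one must check that the spurious $-1$ from the $l=k$ term recombines with the $l=j$ term into the compact numerator $\langle\eta_j-\eta_k,\eta_k\rangle$ rather than leaving stray pieces.
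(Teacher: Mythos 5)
Your proposal is correct and follows essentially the same route as the paper: substitute the principal-data expansion of $(\nabla_X A_j)Y$ from Lemma \ref{lem:shapeprinc} into $T=W^{-1}\sum_j A_j(\nabla_X A_j)Y$, collapse the sum over the parallel normal frame via Parseval, and use that $W^{-1}$ acts by $\|\eta_k\|^{-2}$ on $E_k$; for \eqref{eq:Texplicit2} the paper works directly from \eqref{eq:covprincipal2}, which is the alternative you yourself note, and your delta-bookkeeping recombining the $l=k$ and $l=j$ terms into $\langle\eta_j-\eta_k,\eta_k\rangle$ checks out.
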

\begin{proof}
The action of $W^{-1}$ on a vector $V_i\in E_i$ yields $w_i^{-1}V_i$, where $w_i=\sum_{j=1}^m\lambda^2_i(\xi_j)=\sum_{j=1}^m\langle\xi_j,\eta_i\rangle^2$. Applying the shape operator on \ref{eq:covprincipal1}, and decomposing the last term into the $\oplus_k E_k$ eigenspaces, leads to
\begin{align*}
T(X,Y) = \sum_{j=1}^m\sum_{i=1}^s\frac{1}{2} \frac{X(\lambda^2_i(\xi_j))}{w_i}\proj_i Y + \sum_{j=1}^m\sum_{i=1}^s\sum_{k=1}^s \frac{ 
\langle\xi_j,\eta_i\rangle\langle\xi_j,\eta_k\rangle }{\sum_{l=1}^m\langle\xi_l,\eta_k\rangle^2} \proj_k\circ [ \nabla_X,\proj_i ] Y
\end{align*}
which results in the stated equation by summing over the orthonormal basis $\xi_j$ and rewriting the $X$ derivative.

In order to get \ref{eq:Texplicit2} we apply $A_j$ and $W^{-1}$ over equation \ref{eq:covprincipal2}, taking into account that $W$ is self-adjoint and injective as an endomorphism, due to our regularity assumptions, so that $W^{-1}$ is self-adjoint as well, cf. \cite[App. A Prop. 8.2]{Taylor1}. If $j\neq k$ then the eigenvalues of $Z_k$ yield
\begin{align*}
\langle\,T(X_i,Y_j),Z_k\rangle = \sum_{l=1}^m\langle(\nabla_{X_i}A_l)Y_j,A_l|_{E_k}W|_{E_k}^{-1}Z_k \rangle = \sum_{l=1}^m\frac{\langle\xi_l,\eta_k\rangle}{w_k}\langle\xi_l,\eta_j-\eta_k\rangle\langle\nabla_{X_i}Y_j,Z_k \rangle,
\end{align*}
and the formula follows by summing over the $\xi_l$ basis. In the case $j=k$ the formula is obtained by the same reasoning.
\end{proof}

\section{Theorema Egregium for the Gauss-Obata map}
\label{sec:curvatures}

In \cite[Prop. 3.3]{Vlachos2002} a formula for $\III(R^{(\III)}(X,Y)Y,X)$ is obtained for submanifolds of Euclidean space. In this section we compute the full Riemann-Christoffel curvature tensor $\III(R^{(\III)}(X,Y)Z,V)$ for normally flat submanifolds in space forms, in order to compare the curvature of the two induced metrics on $\cM$, one coming from the isometric embedding in the space form and the other from the Gauss-Obata map into a generalized Grassmannian manifold. We then show how a generalized Kulkarni-Nomizu product encapsulates the operations involved both in our case and in the classical Gauss equation.


\subsection{Curvature in terms of the Weingarten operators}
\label{subsec:curvatureT}

\begin{lem}\label{lem:RinTT}
The Riemann curvature tensor of $(f(\cM),\III)$ and that of $(x(\cM),I)$ are related in terms of the covariant derivative difference vector $T$ by:
\begin{equation}\label{eq:RinTT}
R^{(\III)}(X,Y)Z - R(X,Y)Z = (\nabla_X T)(Y,Z) - (\nabla_Y T)(X,Z) + T(X,T(Y,Z)) - T(Y,T(X,Z)).
\end{equation}
\end{lem}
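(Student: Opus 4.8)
The plan is to derive (\ref{eq:RinTT}) purely formally from the defining relation $\nabla^{(\III)}_XY = \nabla_XY + T(X,Y)$, without using any special feature of $T$ beyond its being a vector-valued tensor (symmetry is not even needed for this step). First I would write out the curvature of $\nabla^{(\III)}$ applied to $Z$,
$$
R^{(\III)}(X,Y)Z = \nabla^{(\III)}_X\nabla^{(\III)}_YZ - \nabla^{(\III)}_Y\nabla^{(\III)}_XZ - \nabla^{(\III)}_{[X,Y]}Z,
$$
and substitute $\nabla^{(\III)} = \nabla + T$ into each of the three terms. The inner substitution $\nabla^{(\III)}_YZ = \nabla_YZ + T(Y,Z)$ turns the first term into $\nabla^{(\III)}_X\bigl(\nabla_YZ + T(Y,Z)\bigr)$, and then the outer substitution expands this into $\nabla_X\nabla_YZ + T(X,\nabla_YZ) + \nabla_X\!\bigl(T(Y,Z)\bigr) + T\bigl(X, T(Y,Z)\bigr)$.

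Next I would organize the resulting sixteen-ish terms into four groups: (a) the pure-$\nabla$ terms $\nabla_X\nabla_YZ - \nabla_Y\nabla_XZ - \nabla_{[X,Y]}Z$, which assemble into $R(X,Y)Z$; (b) the quadratic-in-$T$ terms $T(X,T(Y,Z)) - T(Y,T(X,Z))$, which already appear on the right-hand side; (c) the term $-T([X,Y],Z)$ coming from $\nabla^{(\III)}_{[X,Y]}Z$; and (d) the remaining "mixed" terms $\nabla_X(T(Y,Z)) + T(X,\nabla_YZ) - \nabla_Y(T(X,Z)) - T(Y,\nabla_XZ)$. The crux is to recognize that the covariant derivative of $T$ as a $(2,1)$-tensor is, by the product/Leibniz rule, $(\nabla_X T)(Y,Z) = \nabla_X(T(Y,Z)) - T(\nabla_XY,Z) - T(Y,\nabla_XZ)$; so group (d) equals $(\nabla_X T)(Y,Z) - (\nabla_Y T)(X,Z) + T(\nabla_XY,Z) - T(\nabla_YX,Z)$ (after regrouping the $T(\cdot,\nabla\cdot)$ pieces appropriately, using symmetry of $T$ in the slot that matters). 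Using torsion-freeness, $\nabla_XY - \nabla_YX = [X,Y]$ (eq. \ref{eq:brackets}), the leftover $T(\nabla_XY,Z) - T(\nabla_YX,Z) = T([X,Y],Z)$ cancels exactly against group (c). Collecting (a), (b), (d) then gives precisely (\ref{eq:RinTT}).

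The only genuinely delicate bookkeeping — and the step I'd flag as the main obstacle — is the clean separation of the four $\nabla_X(T(Y,Z))$-type "mixed" terms into two tensorial derivatives $(\nabla_X T)(Y,Z) - (\nabla_Y T)(X,Z)$ plus a Lie-bracket remainder: one has to be careful to attribute each $T(\nabla_\bullet\bullet,\bullet)$ term to the correct $(\nabla T)$ and to track which argument slot of $T$ the symmetry is being used on. This is entirely standard (it is the same computation as the classical difference-of-connections formula for curvature), so no deep idea is required; it is purely a matter of not dropping or misplacing a term. Since $T$ is symmetric by (\ref{eq:brackets}), any ambiguity in the ordering of its arguments is harmless. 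I would therefore present the computation compactly, substituting $\nabla^{(\III)} = \nabla + T$, expanding, and grouping, with one line invoking the Leibniz rule for $\nabla T$ and one line invoking torsion-freeness to finish.
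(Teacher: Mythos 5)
Your proposal is correct and follows essentially the same route as the paper: substitute $\nabla^{(\III)}=\nabla+T$ into the definition of $R^{(\III)}$, expand, collect the pure-$\nabla$ terms into $R(X,Y)Z$ and the remaining mixed terms into $(\nabla_X T)(Y,Z)-(\nabla_Y T)(X,Z)$ via the Leibniz rule, and cancel the leftover $T(\nabla_XY-\nabla_YX,Z)$ against $-T([X,Y],Z)$ by torsion-freeness. The bookkeeping you flag as the main obstacle works out exactly as you describe, matching the paper's computation.
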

\begin{proof}
By definition of the curvature tensor in terms of second order covariant derivatives, and using $\nabla^{(\III)}_X=\nabla_X+T(X,\cdot)$, we have
\begin{align*}
R^{(\III)}(X,Y)Z & = \nabla^{(\III)}_X\nabla^{(\III)}_Y Z - \nabla^{(\III)}_Y\nabla^{(\III)}_X Z -\nabla^{(\III)}_{[X,Y]}Z \\
& =  \nabla^{(\III)}_X(\nabla_YZ + T(Y,Z)) - \nabla^{(\III)}_Y(\nabla_XZ + T(X,Z)) -\nabla_{[X,Y]}Z - T([X,Y], Z) \\
& = R(X,Y)Z + \nabla^{(\III)}_XT(Y,Z)- \nabla^{(\III)}_YT(X,Z)+T(X,\nabla_YZ)-T(Y,\nabla_XZ)-T([X,Y],Z).
\end{align*}
Now, recalling equation \ref{eq:brackets} and the linearity of $T$,
\begin{align*}
R^{(\III)}(X,Y)Z - R(X,Y)Z = & 
\nabla_X(T(Y,Z)) + T(X,T(Y,Z)) - \nabla_Y (T(X,Z)) - T(Y,T(X,Z)) \\
& \quad - T(Y,\nabla_XZ) + T(X,\nabla_YZ) -T(\nabla_XY,Z) + T(\nabla_YX,Z),
\end{align*}
which yields \ref{eq:RinTT} by collecting terms with $(\nabla_X T)(Y,Z)=\nabla_X(T(Y,Z))-T(\nabla_XY,Z)- T(Y,\nabla_XZ)$.
\end{proof}

The previous lemma realizes the curvature tensor of the Gauss image in terms of $T$, which from the results of the previous section is completely determined by the extrinsic shape operators. However, for hypersurfaces in non-flat space forms, the authors of \cite[Prop. 5.1]{Liu1999} obtain an expression resembling a \emph{theorema egregium} by writing it in terms of the second fundamental form. On the other hand \cite[eq. 5]{Aledo2005} shows that, for hypersurfaces with the second fundamental form as metric, the terms in $\nabla T$ have zero $\II$-trace of its Ricci contraction, whereas the terms of $T$ composed with itself can be decoupled by symmetries. We shall thus investigate \ref{eq:RinTT} in order to look for symmetries and further decomposition in arbitrary codimension.

{\bf Proof of Theorem \ref{th:main} (ii)}
We expand right hand side of equation \ref{eq:RinTT} when contracted with the $\III$ metric to yield $\III( R^{(\III)}(X,Y)Z - R(X,Y)Z,V)$.
First notice that the Codazzi symmetries of \ref{eq:Codazzi1} allow us to separate the composed $T$ operations:
\begin{align*}
\III( T(X,T(Y,Z)) -  T(Y,T(X,Z)), V ) & = \sum_j\langle A_j(\nabla_X A_j)T(Y,Z), V \rangle - \langle A_j(\nabla_Y A_j)T(X,Z), V \rangle \\
& = \sum_j\langle (\nabla_X A_j)(A_jV), T(Y,Z) \rangle - \langle (\nabla_Y A_j)(A_jV), T(X,Z) \rangle.
\end{align*}
By the covariant derivative of equation \ref{eq:Aderivative} and summing over $j$ these two terms transform into
\begin{align}
\nonumber \sum_j\langle\, (\nabla_X A_j)(A_jV), T(Y,Z) \,\rangle & = 
\sum_j\langle\, \nabla_X(A_j^2\, V)-A_j\nabla_X(A_j V), T(Y,X) \,\rangle
\\ \nonumber
& = \sum_j\langle\, \nabla_X(A^2_j\, V) - A_j^2\nabla_X V -A_j(\nabla_X A_j)V, T(Y,Z)  \,\rangle
\\
& =\langle\, (\nabla_X W)V, T(Y,Z)\,\rangle + \III( T(X,V), T(Y,Z) ) \label{eq:TT1}
\end{align}
and analogously
\begin{equation}\label{eq:TT2}
-\sum_j\langle\, (\nabla_Y A_j)(A_jV), T(X,Z) \,\rangle = -\langle\, (\nabla_Y W)V, T(X,Z)\,\rangle + \III( T(Y,V), T(X,Z) ).
\end{equation}
Recall that the summands dependent on the $\nabla T$ comprise the following terms
\begin{align}
\nonumber \III( \nabla_X(T(Y,Z)) -T(\nabla_XY,Z)-T(Y,\nabla_XZ) ,\, V ) - \III( \nabla_Y(T(X,Z)) -T(\nabla_Y X,Z)-T(X,\nabla_Y Z) ,\, V ).
\end{align}
These can be similarly written in $T$ and derivatives of $W$. Using the symmetry \ref{eq:ObataSym} and defining an auxiliary operator $N_X = \sum_j A_j(\nabla_X)A_j$, the $\nabla_X(T(X,Y))-\nabla_Y(T(X,Z))$ terms yield, using again the Codazzi equations:
\begin{align}
\III( \nabla_X(T(Y,Z) ,\, V ) & = \langle\, W\nabla_X(W^{-1}N_Y Z) ,\, V \,\rangle  \nonumber \\
& = \langle\, \nabla_X(WW^{-1}N_YZ) - (\nabla_XW)T(Y,Z),\, V\,\rangle \nonumber\\
& = \langle\, \nabla_X(N_YZ) ,\, V \,\rangle - \langle\, (\nabla_XW)V,\, T(Y,Z) \,\rangle, \label{eq:delT1}
\end{align}
and similarly
\begin{equation}\label{eq:delT2}
- \III(\nabla_Y(T(X,Z) ,\, V ) = - \langle\, \nabla_Y(N_XZ) ,\, V \,\rangle + \langle\, (\nabla_YW)V,\, T(X,Z) \,\rangle,
\end{equation}
so that the terms with derivatives $\nabla W$ cancel out when summing up \ref{eq:TT1}, \ref{eq:TT2}, \ref{eq:delT1} and \ref{eq:delT2}, resulting in the following expression for the right hand side of (\ref{eq:RinTT}) when $\III$-paired with $V$:
\begin{align}\label{eq:RHSmid}
\nonumber \III( T(\nabla_YX,Z) & + T(X,\nabla_YZ) -T(\nabla_XY,Z)-T(Y,\nabla_XZ) ,\, V )\; + \\
& + \langle\, \nabla_X(N_YZ)- \nabla_Y(N_XZ) ,\, V \,\rangle + \III( T(X,Z) ,\, T(Y,V)) - \III( T(X,V) ,\, T(Y,Z) ).
\end{align}
The first and third terms of the first line yield a Lie product term
$$
\III( T(\nabla_YX,Z) -T(\nabla_XY,Z),\, V ) = - \langle\, N_{[X,Y]}Z ,\, V \,\rangle,
$$
whereas the first and second terms of the second line of \ref{eq:RHSmid} expand as
$$
\langle\, (\nabla_X N_Y)Z + N_Y\nabla_X Z - (\nabla_YN_X)Z - N_X\nabla_YZ ,\, V \,\rangle.
$$
The second and fourth terms of \ref{eq:RHSmid} cancel out with the corresponding ones of the last equation, so \ref{eq:RHSmid} becomes
\begin{equation}\label{eq:RHSalmost}
\langle\, \left(\nabla_XN_Y +\nabla_YN_X-N_{[X,Y]} \right)Z ,\, V \,\rangle + \III( T(X,Z) ,\, T(Y,V)) - \III( T(X,V) ,\, T(Y,Z) ).
\end{equation}
Finally
\begin{align}
\nonumber & \nabla_XN_Y +\nabla_YN_X-N_{[X,Y]} = \sum_j \nabla_X(A_j\nabla_Y A_j) -\nabla_Y(A_j\nabla_X A_j) - A_j\nabla_{[X,Y]}A_j \\
\nonumber & = \sum_j (\nabla_X A_j)(\nabla_YA_j)+ A_j(\nabla_X\nabla_YA_j) -(\nabla_Y A_j)(\nabla_XA_j)- A_j(\nabla_Y\nabla_XA_j) - A_j(\nabla_{[X,Y]}A_j) \\
\nonumber & = \sum_j A_jR(X,Y)A_j + \left[\, \nabla_XA_j,\,\nabla_YA_j \,\right]
\end{align}
which proves equation \ref{eq:comparisonR} upon introducing $WW^{-1}$ to write it in terms of the metric $\III$.
\qed


\subsection{Generalized Kulkarni-Nomizu product}
\label{subsec:curvatureKN}

The usual Kulkarni-Nomizu product is a particularization of the product in the graded algebra $\bigoplus^n_{i=1}S^2(\Omega^i\cM)$ given on simple elements by
$$
(\alpha\cdot\beta)\KN(\gamma\cdot\delta) = (\alpha\wedge\gamma)\odot(\beta\wedge\delta),
$$
where $\odot$ is the symmetric product. For $h$ and $k$ symmetric $(0,2)$-tensors, and tangent vectors $X_1,X_2,X_3,X_4$, the Kulkarni-Nomizu product is:
\begin{equation}\label{eq:KNdef}
(h\KN k)(X_1,X_2,X_3,X_4) = \frac{1}{2}\left|\begin{array}{ll}
h(X_1,X_3) & h(X_1,X_4) \\
k(X_2,X_3) & k(X_2,X_4)
\end{array}\right|
+\frac{1}{2}
\left|\begin{array}{ll}
k(X_1,X_3) & k(X_1,X_4) \\
h(X_2,X_3) & h(X_2,X_4)
\end{array}\right|,
\end{equation}
sometimes the normalization by $1/2$ is not included. The Kulkarni-Nomizu product is of interest since it plays a role in the tensor decomposition of algebraic curvature tensors, see \cite{besse2007}.

The Gauss equation of a submanifold $\cM$ in some Riemannian manifold $\overline{\cM}$ relates the intrinsic Riemann curvature tensor of $\cM$ to that of the ambient manifold via a combination of products of the second fundamental form, a purely extrinsic object dependent on the embedding:
\begin{equation}\label{eq:Gauss}
\langle \overline{R}(X,Y)Z,V\rangle =  \langle R(X,Y)Z,V\rangle + \langle \II(X,Z),\II(Y,V)\rangle - \langle \II(X,V),\II(Y,Z)\rangle.
\end{equation}
In the classical case of surfaces in space, this equation results in the expression of the scalar curvature appearing in the \emph{theorema egregium}:
$$
K = \frac{LN-M^2}{eg-f^2},
$$
where $L, M, N$ are the independent components of the second fundamental form and $e, f, g$ those of the first fundamental form.
The combination of operations in \ref{eq:Gauss} appears as well in our main formula \ref{eq:comparisonR}, and resembles the expression of \ref{eq:KNdef} for $h=k$. If we extend the definition to bilinear forms valued in an inner product space $(E,g)$, so that the products of \ref{eq:KNdef} are understood as scalar $g$-products of the vector-valued forms $h,k$, then the operations carry over and generalize.
\begin{defi}\label{def:KN}
The generalized Kulkarni-Nomizu product $\KN_g$ of two vector-valued symmetric bilinear forms $h,k\in Sym^2(T_p\cM^*)\otimes E$, where $(E,g)$ is a finite-dimensional vector space with inner product $g$, is defined to be
\begin{align}\label{eq:KNdefGeneral}
(h\KN_g k)(X_1,X_2,X_3,X_4) = & \frac{1}{2}\left[ g(h(X_1,X_3),k(X_2,X_4)) - g(h(X_1,X_4),k(X_2,X_3)) \right.\\
& +\left. g(h(X_2,X_4),k(X_1,X_3)) - g(h(X_2,X_3),k(X_1,X_4))\nonumber
\right],
\end{align}
for any $X_1,X_2,X_3,X_4\in T_p\cM$.
\end{defi}
Using this expression, the usual Kulkarni-Nomizu product is $\KN_g$ when $(E,g)$ is the real field with the usual multiplication. Now, equation \ref{eq:Gauss} can be written as
$$
\langle \overline{R}(X,Y)Z,V\rangle = \langle R(X,Y)Z,V\rangle + (\II\KN_I \II) (X,Y,Z,V).
$$
where the inner product space in this case is the normal space to the manifold with the scalar product induced from the ambient Riemannian manifold, (which we have also called $I$ by abuse of notation to stress its relationship with the first fundamental form when restricted to the tangent space instead).

{\bf Proof of Corollary \ref{cor:main}}
In equation \ref{eq:comparisonR}, the terms
$$
\III ( T(X,Z),\, T(Y,V) ) -  \III( T(X,V),\, T(Y,Z))
$$
clearly come from $(T\KN_{\III}T)(X,Y,Z,V)$, since $T$ can be regarded as a bilinear form with values in the tangent space, with inner product given by the third fundamental form. Now, defining the symmetric bilinear forms $J_j$ valued in the tangent space as
$$J_j(X,Y)=W^{-\frac{1}{2}}(\nabla_X A_j)Y,$$
we can apply the Codazzi symmetires to the bracket terms from $P$ in \ref{eq:comparisonR} to obtain
\begin{align*}
& \III(W^{-1}\sum_{j=1}^m\left[ \nabla_X A_j,\,\nabla_Y A_j \right]Z,V) =\sum_{j=1}^m\langle\, (\nabla_X A_j)V,(\nabla_Y A_j)Z \,\rangle -\langle\, (\nabla_Y A_j)V,(\nabla_X A_j)Z \,\rangle \\
& = \sum_{j=1}^m \III( J_j(X,V),\, J_j(Y,Z))- \III ( J_j(X,Z),\, J_j(Y,V) ) = - \sum_{j=1}^m (J_j\KN_{\III} J_j)(X,Y,Z,V).
\end{align*}
\qed


\subsection{Scalar curvature of the Gauss image}
\label{subsec:curvatureScalar}


{\bf Proof of Theorem \ref{th:main} (iii)}
To compute the scalar curvature of the Gauss map we just need to contract the tensors in equation \ref{eq:comparisonR} with respect to a $\III$-orthonormal basis $\{V_\mu\}_{\mu=1}^n$ of $T_p\cM$ at every $p\in\cM$. Since for normally flat submanifolds the Weingarten operators can be simultaneously diagonalized by a tangent $I$-orthonormal basis $\{U_\mu\}_{\mu=1}^n$, we have real numbers $\lambda_{j\mu}$ such that
\begin{equation}
A_{\xi_j}U_\mu = \lambda_{j\mu} U_\mu\text{ such that } \langle U_\mu, U_\nu\rangle = \delta_{\mu\nu},\quad j =1,\dots,m,\mu =1,\dots,n.
\end{equation}
We choose $V_\mu=w_\mu^{-1/2}U_\mu$ with $w_\mu = \sum_{j=1}^m\lambda_{j\mu}^2$, for all $\mu$, so that 
\begin{equation}
W V_\mu = w_\mu V_\mu,\text{ and } \III( V_\mu, V_\nu)= \langle w_\mu\frac{U_\mu}{\sqrt{w_\mu}}, \frac{U_\nu}{\sqrt{w_\nu}}\rangle= \delta_{\mu\nu},\quad j =1,\dots,m,
\end{equation}
and thus $\{V_\mu\}_{\mu=1}^n$ is the $\III$-orthonormal basis to work with.

Now, by definition of Ricci contraction $\tr_c R=\tr(V\mapsto R(V,X)Y)$ with respect to the corresponding metric, the self-adjointness of $W$, the definition of $V_\mu$ and the linearity of $R$, we have
\begin{align*}
\mathcal{R}ic^{(\III)}(X,Y) = \sum_{\mu=1}^n \III(R^{(\III)}(V_\mu,X)Y,V_\mu) =  \sum_{\mu=1}^n \langle R^{(\III)}(V_\mu,X)Y, W V_\mu\rangle =  \sum_{\mu=1}^n \langle R^{(\III)}(U_\mu,X)Y, U_\mu\rangle.
\end{align*}
Applying our fundamental curvature relation of equation \ref{eq:comparisonR} leads to
\begin{align*}
\mathcal{R}ic^{(\III)}(X,Y) = \mathcal{R}ic(X,Y) + \sum_{\mu=1}^n\left(\; \langle P(U_\mu,X)Y,U_\mu\rangle +\III(T(V_\mu,Y),T(V_\mu,X)) -\III(T(V_\mu,V_\mu),T(X,Y)) \;\right)
\end{align*}
We obtain the scalar curvature by further contracting this equation :
\begin{align*}
\mathcal{R}^{(\III)} = \sum_{\nu=1}^n\mathcal{R}ic^{(\III)}(V_\nu,V_\nu) = \sum_{\nu=1}^n \mathcal{R}ic(V_\nu,V_\nu) & + \sum_{\nu=1}^n\sum_{\mu=1}^n\left(\; \langle P(U_\mu,V_\nu)V_\nu,U_\mu\rangle\right. \\ 
 & \left. +\III(T(V_\mu,V_\nu),T(V_\mu,V_\nu)) -\III(T(V_\mu,V_\mu),T(V_\nu,V_\nu)) \;\right),
\end{align*}
where the second summand is the $\III$-trace of the Ricci contration of $P$, i.e., $\tr_{\III}\tr_c P$, the third summand is the $\III$-norm squared of the tensor $T$ (the analogous to the Frobenius norm squared of a matrix), and the last summand is the $\III$-norm squared of the vector $\tr_{\III}T = \sum_{\mu=1}^n T(V_\mu,V_\mu)$. That is, we obtain
\begin{align*}
& \mathcal{R}^{(\III)} =   \sum_{\nu=1}^n \mathcal{R}ic(V_\nu,V_\nu) + \tr_{\III}\tr_c P + ||T||^2_{\III} - ||\tr_{\III}T||_{\III}^2.
\end{align*}
Finally, using the relationship of equation \ref{eq:Obata}, we identify 
\begin{align*}
\sum_{\nu=1}^n\mathcal{R}ic(V_\nu,V_\nu) & = k(n-1)\sum_{\nu=1}^n\langle V_\nu,V_\nu\rangle + \langle\; \sum_{\nu=1}^n\II(V_\nu,V_\nu), H \rangle - \sum_{\nu=1}^n\III(V_\nu,V_\nu) \\
& = k(n-1)\sum_{\nu=1}^n\frac{1}{w_\nu} + \sum_{\nu=1}^n\langle\, A_H V_\nu,V_\nu\rangle - n.
\end{align*}
Here, the second term is clearly $\tr_{\III}A_H$, and the first term satisfies
$$
\sum_{\nu=1}^n\frac{1}{w_\nu} = \frac{\sum_{1\leq i_1<\dots<i_{n-1}\leq n}^n w_{i_1}\cdots w_{i_{n-1}}}{\prod_{\nu=1}^n w_\nu} = \frac{\sigma_{n-1}(W)}{\sigma_n(W)},
$$
writing $\sigma_{n-1}(W),\sigma_n(W)=\det(W)$ for the elementary symmetric polynomials on the eigenvalues of W, and we are done.
\qed


%

%
%
%
%
%


\section{Concluding remarks}

In the present work we have established formulas for the intrinsic geometry of the Gauss map via the third fundamental form as generalized by Obata, for a submanifold with flat normal bundle in a space form satisfying general regularity conditions. We have found a relationship analogous to a \emph{theorema egregium} that yields the difference of the intrinsic Riemann-Christoffel curvature tensors of the Gauss image and the original submanifold in terms of tensors solely constructed from the extrinsic Weingarten operators of the original submanifold. For this, the explicit dependency of the difference of the induced covariant derivatives on the shape operators was obtained, along with the introduction of a generalized Kulkarni-Nomizu product. The scalar curvature of the Gauss map in terms of extrinsic scalars was established as a consequence. 
Once the geometry of the Gauss map has been established, applications to the study of normally flat submanifolds may be possible, along with the investigation of extending the results to other types of submanifolds. For example, a natural line of research would be the study of the relationship between the mean curvature of the Gauss image, as defined by normal deformations of the Gauss map, and the third fundamental form scalar curvature, in order to establish characterizations similar to the case of the classical ovaloids and extrinsic spheres. 




\bibliographystyle{plainnat}
\bibliography{journal-article}

\end{document}